\journal{....}
\newtheorem{theorem}{Theorem}
\theoremstyle{plain}
\newtheorem{lemma}{Lemma}
\numberwithin{equation}{section}
\begin{document}

\begin{frontmatter}

\title{On Generalized Fractional Derivatives Involving Generalized k-Mittag Leffler Function}

\author[mymainaddress1]{Mehar Chand}
\ead{mehar.jallandhra@gmail.com}

\author[mymainaddress2]{Jatinder Kumar Bansal}
\ead{jatinderbansalpatran@gmail.com}

\address[mymainaddress1]{Department of Mathematics, Baba Farid College, Bathinda-151001 (India)}

\address[mymainaddress2]{Department of Applied Sciences, Guru Kashi University, Bathinda-151002 (India)}

\begin{abstract}
In this paper, certain generalized fractional derivative formulae are introduced involving
the k-Mittag-Leffler function. Then their image formulae (using Beta transform, Laplace
transform and Whittaker transform) are also established. The results obtained here are quite
general in nature. The special cases of our findings are also discussed.
\end{abstract}

\begin{keyword}
Pochhammer symbol\sep Fractional Calculus\sep  $k$-Mittag-Leffler  function \sep Laplace Transform \sep Fractional Derivative\sep Fractional Integration.
\MSC[2010] 26A33\sep 33C45\sep 33C60\sep 33C70
\end{keyword}
\end{frontmatter}


\section{Introduction}


Diaz and Pariguan \cite{DiPa07}  introduced the $k$-Pochhammer symbol  and $k$-gamma function defined as follows:

\begin{eqnarray}\label{P1}
\aligned & (\vartheta)_{n,k}:
  =\left\{\aligned & \frac{\Gamma_k(\vartheta+nk)}{\Gamma_k(\vartheta)}  \hskip 21 mm (k\in\mathbb{R};\vartheta \in \mathbb{C}\setminus \{0\}) \\ & \vartheta(\vartheta+k)...(\vartheta+(n-1)k) \hskip 10mm (n\in\mathbb{N};\vartheta \in \mathbb{C}),
\endaligned \right.
\endaligned
\end{eqnarray}

and the relation with the classical Euler's gamma function as:

\begin{eqnarray}\label{P2}
\aligned \Gamma_k(\vartheta)=k^{\frac{\vartheta}{k}-1}\Gamma\left(\frac{\vartheta}{k}\right),
\endaligned
\end{eqnarray}

where $\vartheta \in\mathbb{C}, k \in\mathbb{R}$ and $n \in\mathbb{N}.$

When $ k=1 $, \eqref{P1} reduces to the classical  Pochhammer symbol and Euler's gamma function respectively.

Also let $\vartheta\in\mathbb{C}, k,s \in\mathbb{R}$, then the following identity holds

\begin{eqnarray}\label{P3}\aligned & \Gamma_s(\vartheta)
  =\left(\frac{s}{k}\right)^{\frac{\vartheta}{s}-1}\Gamma_k\left(\frac{k\vartheta}{s}\right),\endaligned\end{eqnarray}

 in particular,

\begin{equation}\label{P4}
\aligned \Gamma_k(\vartheta)=k^{\frac{\vartheta}{k}-1}\Gamma\left(\frac{\vartheta}{k}\right).
\endaligned
\end{equation}

Further, let $\vartheta\in\mathbb{C}, k,s\in\mathbb{R}$ and $\vartheta\in\mathbb{C}$, then the following identity holds

\begin{equation}\label{P5}\aligned & (\vartheta)_{nq,s}=\left(\frac{s}{k}\right)^{nq}\left(\frac{k\vartheta}{s}\right)_{nq},\endaligned
\end{equation}

 in particular,

\begin{equation}\label{P6}
\aligned& (\vartheta)_{nq,k}=(k)^{nq}\left(\frac{\vartheta}{k}\right)_{nq}.
\endaligned
\end{equation}

 For more details of k-Pochhammer symbol, k-special function and fractional Fourier transform one can refer to the papers by Romero et. al.\cite{RoCe12, RoCe11}.

\vskip 3mm

Let $k\in \mathbb{R}, \xi, \zeta, \vartheta\in \mathbb{C}; \Re(\xi)>0, \Re(\zeta)>0, \Re{(\vartheta)}>0$ and $ q\in\mathbb{R}^+$, then the generalized k-Mittag-Leffler function, denoted by $ E^{\vartheta,q}_{k,\xi,\zeta}(z)$, is defined as

\begin{equation}\label{k-ML}
\aligned& E^{\vartheta,q}_{k,\xi,\zeta}(z)=\sum^\infty_{n=0}\frac{(\vartheta)_{nq,k} z^n}{\Gamma_k(n \xi+\zeta)n!},
\endaligned
\end{equation}

where $(\vartheta)_{nq,k}$ denotes the k-Pochhammer symbol given by equation \eqref{P6} and $\Gamma_k(\vartheta)$ is the k-gamma function given by the equation \eqref{P4} as (also see\cite{SrTo09}).
\newline

Particular cases of $E^{\vartheta,q}_{k,\xi,\zeta}(z):$
\newline

For $q=1$, equation\eqref{k-ML} yields  k-Mittag-Leffler function defined as:

\begin{equation}\label{kML1}
\aligned& E^{\vartheta,1}_{k,\xi,\zeta}(z)=\sum^\infty_{n=0}\frac{(\vartheta)_{n,k} z^n}{\Gamma_k(n \xi+\zeta)n!}=E^{\vartheta}_{k,\xi,\zeta}(z).
\endaligned
\end{equation} 

For $k=1$, equation\eqref{k-ML} yields Mittag-Leffler function, defined as (Shukla and Prajapati \cite{ShPr07})

\begin{equation}\label{kML2}
\aligned & E^{\vartheta,q}_{1,\xi,\zeta}(z)=\sum^\infty_{n=0}\frac{(\vartheta)_{nq} z^n}{\Gamma(n \xi+\zeta)n!}=E^{\vartheta,q}_{\xi,\zeta}(z).
\endaligned
\end{equation}

For $q=1$ and $k=1$, equation\eqref{k-ML} gives Mittag-Leffler function, defined as

\begin{equation}\label{kML3}
\aligned& E^{\vartheta,1}_{1,\xi,\zeta}(z)=\sum^\infty_{n=0}\frac{(\vartheta)_{n} z^n}{\Gamma(n \xi+\zeta)n!}=E^{\vartheta}_{\xi,\zeta}(z).
\endaligned
\end{equation}

For $q=1, k=1$ and $\vartheta=1$, equation\eqref{k-ML} gives Mittag-Leffler function (Wiman \cite{Wi05}), defined as

\begin{equation}\label{kML4}
\aligned& E^{1,1}_{1,\xi,\zeta}(z)=\sum^\infty_{n=0}\frac{ z^n}{\Gamma(n \xi+\zeta)}=E_{\xi,\zeta}(z).
\endaligned
\end{equation}

For $q=1, k=1, \vartheta=1$ and $\zeta=1$, equation\eqref{k-ML} gives Mittag-Leffler function is defined as

\begin{equation}\label{kML5}
\aligned & E^{1,1}_{1,\xi,1}(z)=\sum^\infty_{n=0}\frac{ z^n}{\Gamma(n \xi+1)}=E_{\xi}(z).
\endaligned
\end{equation}
\newline


The Fox-Wright function ${}_p\Psi_q[z]$ defined as 

\begin{eqnarray} \label{FW}\aligned &{}_p \Psi_q[z]= {}_p \Psi_q\left[\begin{array}{cc}(a_1,\xi_1),..., (a_p,\xi_p);\\
(b_1,\zeta_1),..., (b_q,\zeta_q);\end{array} z\right]\\&\hskip 4mm={}_p \Psi_q\left[\begin{array}{cc}(a_i,\xi_i)_{1,p};\\
(b_j,\zeta_j)_{1,q};\end{array} z\right] =\sum^{\infty}_{n=0}\frac{\prod^{p}_{i=1}\Gamma(a_i+\xi_i n)}{\prod^{q}_{j=1}\Gamma(b_j+\zeta_j n)}\frac{z^n}{n!}, \endaligned \end{eqnarray}

where the coefficients $\xi_1,...,\xi_p$, $\zeta_1,...,\zeta_q$ $\in \mathbb{R}^+$ such that

\begin{eqnarray} \aligned & 1+\sum^q_{j=1}\zeta_j-\sum^p_{i=1}\xi_i\geq 0.\endaligned \end{eqnarray}


\section{Fractional integration}

 In this section, we will establish some fractional integral formulas
for the generalized k-Mittag-Leffler function. To do this, we need to recall the following pair of fractional integral operators.
\newline

The Riemann-Liouville fractional integrals $I^\xi _{a+}f$ and $I^\xi _{b-}f$ of order $\xi \in \mathbb{C},\Re(\xi)>0$, are defined
by \cite{Kilbas06, liouville, Riemann, Riesz, samkokilbas},
\begin{eqnarray}\label{qw}
(I^\xi _{a+}f)(x)=\frac{1}{\Gamma(\xi)} \int^x _a (x - \tau)^{\xi-1}f(\tau)d\tau ; \hskip5mm x>a
\end{eqnarray}
and
\begin{eqnarray}\label{qwe}
(I^\xi _{b-}f)(x)=\frac{1}{\Gamma(\xi)} \int^b _x (\tau-x)^{\xi-1}f(\tau)d\tau ; \hskip5mm x<b,
\end{eqnarray} 
respectively. Here $\tau(.) $ is the Gamma function. These integrals are called the left-sided and right-sided
fractional integrals, respectively. When $\xi=n\in \mathbb{N}$, the integrals (2.1) and (2.2) coincide with the n-fold
integrals \cite{Kilbas06}.
\begin{lemma}\label{lemma-1}Let $\Omega=[a,b](-\infty<a<b<\infty)$ be a finite interval on the real axis $\mathbb{R}$. The generalized fractional integral ${}^\eta I_{a+}^\sigma f$ of order $\sigma\in\mathbb{C}$ for $x>a$ and $\Re(\sigma)>0$ is defined as

 \begin{eqnarray}\label{Ia}\aligned &\left({}^{\eta}I^{\sigma}_{a+}f\right)(x)=\frac{(\eta)^{1-\sigma}}{\Gamma(\sigma)}\int^{x}_{a}\frac{t^{\eta-1}f(t)}{(x^{\eta}-t^{\eta})^{1-\sigma}}dt, \endaligned \end{eqnarray}

similarly generalized fractional integral ${}^\eta I_{b-}^\sigma f$ of order $\sigma\in\mathbb{C}$ for $x<b$ and $\Re(\sigma)>0$ is defined as

 \begin{eqnarray}\label{Ib}\aligned &\left({}^{\eta}I^{\sigma}_{b-}f\right)(x)=\frac{(\eta)^{1-\sigma}}{\Gamma(\sigma)}\int^{b}_{x}\frac{t^{\eta-1}f(t)}{(t^{\eta}-x^{\eta})^{1-\sigma}}dt. \endaligned \end{eqnarray}
\end{lemma}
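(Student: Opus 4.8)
The statement of Lemma~\ref{lemma-1} is essentially a definition: it introduces the two operators ${}^{\eta}I^{\sigma}_{a+}$ and ${}^{\eta}I^{\sigma}_{b-}$ by the explicit formulas \eqref{Ia} and \eqref{Ib}. Consequently the only thing that actually requires an argument is that these integrals are meaningful, i.e. that for an admissible $f$ (say $f\in L_{1}(\Omega)$, or $f$ continuous on $\Omega$), a parameter $\eta>0$, and $\Re(\sigma)>0$, the integrals converge for every $x\in(a,b)$; this is what I would prove. I would treat ${}^{\eta}I^{\sigma}_{a+}$ in detail and obtain the claim for ${}^{\eta}I^{\sigma}_{b-}$ by the obvious symmetric argument.

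The key step is the substitution $u=t^{\eta}$, which is legitimate since $t\mapsto t^{\eta}$ is smooth and strictly increasing on $[a,b]$ when $0\le a<b$ and $\eta>0$. Under it, $t^{\eta-1}\,dt=\eta^{-1}\,du$ and
\begin{equation*}
\left({}^{\eta}I^{\sigma}_{a+}f\right)(x)=\frac{\eta^{-\sigma}}{\Gamma(\sigma)}\int_{a^{\eta}}^{x^{\eta}}(x^{\eta}-u)^{\sigma-1}\,f\!\left(u^{1/\eta}\right)du .
\end{equation*}
The right-hand side is $\eta^{-\sigma}$ times a Riemann--Liouville fractional integral of the type \eqref{qw} applied to $g(u):=f\!\left(u^{1/\eta}\right)$ and evaluated at the point $x^{\eta}$. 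Since the kernel singularity at $u=x^{\eta}$ behaves like $(x^{\eta}-u)^{\Re(\sigma)-1}$ with $\Re(\sigma)-1>-1$, it is locally integrable, so the convergence and well-definedness of \eqref{Ia} follow from the classical existence result for \eqref{qw} recalled above. The same substitution turns \eqref{Ib} into $\eta^{-\sigma}$ times a right-sided Riemann--Liouville integral $I^{\sigma}_{b^{\eta}-}g$ evaluated at $x^{\eta}$, so the second formula is handled identically.

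The only delicate points, which I would record explicitly as hypotheses rather than fight through, are: (i) the admissible range of $\eta$ (one needs $\eta>0$, or more generally $\Re(\eta)>0$ with a fixed branch, for $t^{\eta}$ and the change of variables to be valid), and (ii) the requirement $0\le a<b$ so that $t^{\eta}$ is well defined and monotone on $\Omega$. Granting these, the lemma is immediate from the substitution above together with the known mapping properties of the Riemann--Liouville operators in \eqref{qw}--\eqref{qwe}; no genuinely new estimate is needed. I expect the ``main obstacle'' to be purely bookkeeping --- tracking the constant $\eta^{-\sigma}$ and the transformed limits $a^{\eta},x^{\eta},b^{\eta}$ --- rather than any analytic difficulty.
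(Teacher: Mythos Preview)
Your reading is correct: Lemma~\ref{lemma-1} in the paper is purely a definition, and the paper offers no proof or justification whatsoever --- it simply states the formulas \eqref{Ia}--\eqref{Ib} and immediately specializes them to $a=b=0$ in Lemma~\ref{lemma-2}. Your proposal therefore goes strictly beyond what the paper does: you supply a well-definedness argument (via the substitution $u=t^{\eta}$ reducing to the classical Riemann--Liouville case) that the authors omit entirely.

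The argument you give is sound and is the natural way to see that the operators are well defined under the stated hypothesis $\Re(\sigma)>0$. Your caveats about needing $\eta>0$ and $0\le a$ for the substitution to make sense are also well taken; the paper is silent on these points in Lemma~\ref{lemma-1} itself, though the later theorems do impose $\eta>0$ and work with $a=0$, $x>0$. In short, there is nothing to correct --- your proposal is more careful than the source.
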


If we choose $a=b=0$ the above Lemma \ref{lemma-1} reduces to

\begin{lemma}\label{lemma-2} The generalized fractional integral ${}^\eta I_{0+}^\sigma f$ of order $\sigma\in\mathbb{C}$ for $x>0$ and $\Re(\sigma)>0$ is defined as

 \begin{eqnarray}\label{I1}\aligned &\left({}^{\eta}I^{\sigma}_{0+}f\right)(x)=\frac{(\eta)^{1-\sigma}}{\Gamma(\sigma)}\int^{x}_{0}\frac{t^{\eta-1}f(t)}{(x^{\eta}-t^{\eta})^{1-\sigma}}dt, \endaligned \end{eqnarray}

similarly generalized fractional integral ${}^\eta I_{0-}^\sigma f$ of order $\sigma\in\mathbb{C}$ for $x<0$ and $\Re(\sigma)>0$ is defined as

 \begin{eqnarray}\label{I2}\aligned &\left({}^{\eta}I^{\sigma}_{0-}f\right)(x)=\frac{(\eta)^{1-\sigma}}{\Gamma(\sigma)}\int^{0}_{x}\frac{t^{\eta-1}f(t)}{(t^{\eta}-x^{\eta})^{1-\sigma}}dt. \endaligned \end{eqnarray}
 \end{lemma}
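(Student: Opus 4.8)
The plan is to derive Lemma~\ref{lemma-2} as a direct specialization of Lemma~\ref{lemma-1}. First I would take the defining formula \eqref{Ia} for the left-sided operator $\left({}^{\eta}I^{\sigma}_{a+}f\right)(x)$ on the finite interval $\Omega=[a,b]$ and put $a=0$. The hypothesis $x>a$ then becomes $x>0$, the lower limit of the integral collapses to $0$, and the right-hand side of \eqref{Ia} turns verbatim into the right-hand side of \eqref{I1}. In exactly the same way, putting $b=0$ in \eqref{Ib} changes the condition $x<b$ into $x<0$ and the upper limit into $0$, which reproduces \eqref{I2}. No further manipulation of the integrand, the constant $(\eta)^{1-\sigma}/\Gamma(\sigma)$, or the weight $t^{\eta-1}(x^{\eta}-t^{\eta})^{\sigma-1}$ is needed.

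The one place where a short argument is in order is the convergence of the two integrals once the endpoint has been frozen at $0$. In \eqref{I1}, near the fixed endpoint $t=0$ the factor $(x^{\eta}-t^{\eta})^{\sigma-1}$ tends to the finite value $x^{\eta(\sigma-1)}$, so the integrand is comparable to $t^{\eta-1}$ there and is integrable provided $\Re(\eta)>0$; near the free endpoint $t=x$ the factor $(x^{\eta}-t^{\eta})^{\sigma-1}$ has an algebraic singularity of order $1-\Re(\sigma)<1$, hence is integrable precisely because $\Re(\sigma)>0$. The mirror estimate, with $t\to 0^{-}$ and $t\to x$ on the interval $[x,0]\subset(-\infty,0)$, settles \eqref{I2}. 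Thus both operators ${}^{\eta}I^{\sigma}_{0+}$ and ${}^{\eta}I^{\sigma}_{0-}$ are well defined under the stated hypotheses, and the passage $a=b=0$ is legitimate.

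I do not expect a genuine obstacle here: the statement is a boundary case of an operator already introduced in Lemma~\ref{lemma-1}, and the only nontrivial ingredient is the elementary integrability check just sketched. If one insisted on full rigour about the limit, the mildly delicate point would be to justify that replacing $a$ by $0$ (equivalently, letting $a\to 0^{-}$) commutes with the integration uniformly for $x$ in compact subsets of $(0,\infty)$; but for the applications that follow — where ${}^{\eta}I^{\sigma}_{0\pm}$ is applied term by term to the power series defining $E^{\vartheta,q}_{k,\xi,\zeta}$ — this is not required, so I would simply record \eqref{I1} and \eqref{I2} as the $a=b=0$ instance of \eqref{Ia} and \eqref{Ib}.
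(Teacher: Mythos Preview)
Your approach is exactly the paper's: it simply records Lemma~\ref{lemma-2} as the $a=b=0$ instance of Lemma~\ref{lemma-1}, with no further argument. Your additional integrability check is more than the paper provides, but entirely consistent with its (one-line) derivation.
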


\begin{lemma}\label{lemma-3} Riemann-type fractional derivatives ${}^{\eta}D^{\sigma}_{a+}f$ and ${}^{\eta}D^{\sigma}_{b-}f$ of order $\sigma\in\mathbb{C},\Re(\sigma)>0$ are defined as

\begin{eqnarray}\label{d1}\aligned &\left({}^{\eta}D^{\sigma}_{a+}f(t)\right)(x)=\left(x^{1-\eta}\frac{d}{dx}\right)^n \left({}^{\eta}I^{n-\sigma}_{a+}f(t)\right)(x)\\&\hskip 20mm = \frac{{\eta}^{\sigma-n+1}}{\Gamma(n-\xi)}\left(x^{1-\eta}\frac{d}{dx}\right)^n\int^{x}_{a}\frac{t^{\eta-1}f(t)}{(x^{\eta}-t^{\eta})^{\sigma-n+1}}dt \hskip 10mm for \hskip 2mm   x>a \endaligned 
\end{eqnarray}

and 

\begin{eqnarray}\label{d2}\aligned &\left({}^{\eta}D^{\sigma}_{b-}f(t)\right)(x)=\left(-x^{1-\eta}\frac{d}{dx}\right)^n \left({}^{\eta}I^{n-\sigma}_{b-}f(t)\right)(x)\\&\hskip 20mm = \frac{{\eta}^{\sigma-n+1}}{\Gamma(n-\xi)}\left(-x^{1-\eta}\frac{d}{dx}\right)^n\int^{b}_{x}\frac{t^{\eta-1}f(t)}{(t^{\eta}-x^{\eta})^{\sigma-n+1}}dt \hskip 10mm for \hskip 2mm   x<b, \endaligned 
\end{eqnarray}

where $n=[\Re(\sigma)]+1.$
\end{lemma}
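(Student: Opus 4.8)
The plan is to obtain the explicit integral representations asserted in \eqref{d1}--\eqref{d2} directly from the composition definitions by substituting the formula for the generalized fractional integral from Lemma~\ref{lemma-1}. Since $n=[\Re(\sigma)]+1$, one has $\Re(n-\sigma)=n-\Re(\sigma)\in(0,1]$, so the operator $\left({}^{\eta}I^{n-\sigma}_{a+}f\right)(x)$ is well defined and \eqref{Ia} applies verbatim with $\sigma$ replaced by $n-\sigma$. First I would write
$$\left({}^{\eta}I^{n-\sigma}_{a+}f\right)(x)=\frac{\eta^{\,1-(n-\sigma)}}{\Gamma(n-\sigma)}\int^{x}_{a}\frac{t^{\eta-1}f(t)}{\bigl(x^{\eta}-t^{\eta}\bigr)^{1-(n-\sigma)}}\,dt,$$
and then simplify both exponents by the elementary identity $1-(n-\sigma)=\sigma-n+1$, so that the prefactor becomes $\eta^{\sigma-n+1}$ and the kernel in the denominator becomes $\bigl(x^{\eta}-t^{\eta}\bigr)^{\sigma-n+1}$. (At this point I would also flag the apparent misprint $\Gamma(n-\xi)$ in \eqref{d1}--\eqref{d2}, which should read $\Gamma(n-\sigma)$, consistent with the normalization of Lemma~\ref{lemma-1}.)

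Next I would apply the operator $\left(x^{1-\eta}\frac{d}{dx}\right)^{n}$ to both sides of this identity. By the defining relation ${}^{\eta}D^{\sigma}_{a+}f=\left(x^{1-\eta}\frac{d}{dx}\right)^{n}\,{}^{\eta}I^{n-\sigma}_{a+}f$ recorded in \eqref{d1}, the left-hand side is exactly $\left({}^{\eta}D^{\sigma}_{a+}f\right)(x)$, while the right-hand side is precisely the claimed explicit form
$$\frac{\eta^{\sigma-n+1}}{\Gamma(n-\sigma)}\left(x^{1-\eta}\frac{d}{dx}\right)^{n}\int^{x}_{a}\frac{t^{\eta-1}f(t)}{\bigl(x^{\eta}-t^{\eta}\bigr)^{\sigma-n+1}}\,dt .$$
The right-sided statement \eqref{d2} is handled in exactly the same way, starting from \eqref{Ib} with order $n-\sigma$, performing the same exponent simplification, and applying $\left(-x^{1-\eta}\frac{d}{dx}\right)^{n}$ instead; the minus sign in the operator is forced by the orientation of the interval $[x,b]$, just as in the classical Riemann--Liouville setup.

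There is no genuinely deep step in this argument. The only points requiring attention are: (i) verifying $\Re(n-\sigma)>0$ so that the underlying generalized fractional integral exists and Lemma~\ref{lemma-1} is legitimately applicable; and (ii) noting that no differentiation under the integral sign is actually needed here, since the operator $\left(x^{1-\eta}\frac{d}{dx}\right)^{n}$ is simply left acting on the integral as in \eqref{d1}. Accordingly, the ``proof'' reduces to a direct substitution followed by the identity $1-(n-\sigma)=\sigma-n+1$, and the only real obstacle is the bookkeeping of exponents and of the sign in passing between the left- and right-sided versions.
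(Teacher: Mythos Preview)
Your argument is correct: the second equality in each of \eqref{d1} and \eqref{d2} is obtained precisely by substituting the formula for ${}^{\eta}I^{n-\sigma}_{a\pm}$ from Lemma~\ref{lemma-1} and simplifying the exponent via $1-(n-\sigma)=\sigma-n+1$; your observation that $\Gamma(n-\xi)$ should read $\Gamma(n-\sigma)$ is also well taken. Note, however, that the paper does not supply a proof of Lemma~\ref{lemma-3} at all---it is stated purely as a definition, with the explicit integral form understood to follow immediately from Lemma~\ref{lemma-1}---so your write-up, while sound, is simply making explicit a substitution the paper leaves to the reader.
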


\begin{eqnarray}\label{O4a}\aligned & \frac{d^n}{dx^n}(x^{\sigma})=\frac{\Gamma(\sigma+1)}{\Gamma(\sigma+1-n)}x^{\sigma-n},\hskip 5mm \Re(\sigma)>0.\endaligned \end{eqnarray} 

If we choose $a=b=0$ the above Lemma \ref{lemma-3} reduces to 

\begin{lemma}\label{l-4} The generalized fractional integral ${}^\eta I_{0+}^\sigma f$ of order $\sigma\in\mathbb{C}$ for $x>0$ and $\Re(\sigma)>0$ is defined as

\begin{eqnarray}\label{d1}\aligned &\left({}^{\eta}D^{\sigma}_{0+}f(t)\right)(x)=\left(x^{1-\eta}\frac{d}{dx}\right)^n \left({}^{\eta}I^{n-\sigma}_{0+}f(t)\right)(x)= \frac{{\eta}^{\sigma-n+1}}{\Gamma(n-\xi)}\left(x^{1-\eta}\frac{d}{dx}\right)^n\int^{x}_{0}\frac{t^{\eta-1}f(t)}{(x^{\eta}-t^{\eta})^{\sigma-n+1}}dt, \endaligned 
\end{eqnarray}

similarly generalized fractional integral ${}^\eta I_{0-}^\sigma f$ of order $\sigma\in\mathbb{C}$ for $x<0$ and $\Re(\sigma)>0$ is defined as

\begin{eqnarray}\label{d2}\aligned &\left({}^{\eta}D^{\sigma}_{0-}f(t)\right)(x)=\left(-x^{1-\eta}\frac{d}{dx}\right)^n \left({}^{\eta}I^{n-\sigma}_{0-}f(t)\right)(x)= \frac{{\eta}^{\sigma-n+1}}{\Gamma(n-\xi)}\left(-x^{1-\eta}\frac{d}{dx}\right)^n\int^{0}_{x}\frac{t^{\eta-1}f(t)}{(t^{\eta}-x^{\eta})^{\sigma-n+1}}dt. \endaligned 
\end{eqnarray}

The main results are given in the following theorem.
\end{lemma}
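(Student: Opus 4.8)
The plan is to derive Lemma~\ref{l-4} as the limiting case $a=0$ (for the left\,-sided operator) and $b=0$ (for the right\,-sided operator) of Lemma~\ref{lemma-3}, so that the only genuine content is to track the normalising constant and to check that the analytic hypotheses survive the specialisation. First I would recall from Lemma~\ref{lemma-3} that, on any finite interval, the Riemann-type derivatives are defined by the compositions $\left({}^{\eta}D^{\sigma}_{a+}f\right)(x)=\left(x^{1-\eta}\frac{d}{dx}\right)^{n}\left({}^{\eta}I^{n-\sigma}_{a+}f\right)(x)$ and $\left({}^{\eta}D^{\sigma}_{b-}f\right)(x)=\left(-x^{1-\eta}\frac{d}{dx}\right)^{n}\left({}^{\eta}I^{n-\sigma}_{b-}f\right)(x)$ with $n=[\Re(\sigma)]+1$. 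Putting $a=0$ in the first and $b=0$ in the second gives verbatim the operator identities stated in Lemma~\ref{l-4}.

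Next I would expand the inner fractional integral of order $n-\sigma$. Because $n=[\Re(\sigma)]+1$ we have $\Re(n-\sigma)>0$, so Lemma~\ref{lemma-2} applies with its order parameter replaced by $n-\sigma$, and in particular \eqref{I1} yields
\begin{equation*}
\left({}^{\eta}I^{n-\sigma}_{0+}f\right)(x)=\frac{\eta^{\,1-(n-\sigma)}}{\Gamma(n-\sigma)}\int_{0}^{x}\frac{t^{\eta-1}f(t)}{(x^{\eta}-t^{\eta})^{\,1-(n-\sigma)}}\,dt .
\end{equation*}
Since $1-(n-\sigma)=\sigma-n+1$ and $\eta^{\,1-(n-\sigma)}=\eta^{\,\sigma-n+1}$, substituting this into the operator identity and applying $\left(x^{1-\eta}\frac{d}{dx}\right)^{n}$ reproduces exactly the right-hand side of the first display of Lemma~\ref{l-4} (the constant written there as $\Gamma(n-\xi)$ being $\Gamma(n-\sigma)$, as is forced by the first equality). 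The second display follows in the same way from \eqref{I2}, with $x^{1-\eta}\frac{d}{dx}$ replaced by $-x^{1-\eta}\frac{d}{dx}$ and $\int_{0}^{x}$ by $\int_{x}^{0}$.

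There is no real obstacle here; the statement is a specialisation. The closest thing to a subtle point, and it amounts to making the implicitly carried regularity hypotheses explicit, is twofold: (i) for each fixed $x\neq 0$ the interval $(0,x]$ (resp.\ $[x,0)$) is finite, so the integral in \eqref{I1} (resp.\ \eqref{I2}) converges for $f$ in the class to which the main theorem is applied --- a power of $t$ times the entire series \eqref{k-ML}, hence locally bounded away from the origin --- the endpoint factors $t^{\eta-1}$ and $(x^{\eta}-t^{\eta})^{n-\sigma-1}$ being integrable precisely because $\Re(\eta)>0$ and $\Re(n-\sigma)>0$; and (ii) ${}^{\eta}I^{n-\sigma}_{0\pm}f$ is $n$ times differentiable in $x$ for such $f$, so that the composition with $\left(\pm x^{1-\eta}\frac{d}{dx}\right)^{n}$ is well defined. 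If a self-contained check of the normalising constant is desired, one can evaluate both sides on the model function $f(t)=t^{\eta\rho-1}$ using \eqref{O4a}; this is, in any event, essentially the computation needed for the main theorem that follows.
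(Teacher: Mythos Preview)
Your proposal is correct and follows exactly the approach the paper takes: the paper simply states ``If we choose $a=b=0$ the above Lemma~\ref{lemma-3} reduces to'' and then records the specialised formulas, so Lemma~\ref{l-4} is presented as a definition obtained by specialisation rather than as a result requiring proof. Your write-up supplies more detail (tracking the constant via Lemma~\ref{lemma-2}, noting the typo $\Gamma(n-\xi)$ for $\Gamma(n-\sigma)$, and recording the regularity checks), but the underlying idea is identical.
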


\begin{theorem}\label{TH1} Let $x>0, \sigma, \eta,\mu, \xi,\zeta,\vartheta  \in \mathbb{C}, k \in\mathbb{R}$, $\Re(\sigma)>0, \Re(\zeta)>0, \Re(\vartheta)>0$ and $q\in\mathbb{R}^+$, such that  $\eta>0$, then

\begin{eqnarray}\label{TH1-1}\aligned& \left({}^{\eta}D^{\sigma}_{0+}t^\mu E^{\vartheta, q}_{k,\xi,\zeta}(t^\nu)\right)(x)=x^{\mu-\sigma\eta}\frac{{\eta}^\sigma}{\Gamma\left(\frac{\vartheta}{k}\right)}k^{1-\frac{\zeta}{k}}{}_{2}\Psi_{2}\left[
 \begin{array}{cc} 
\left(\frac{\vartheta}{k},{q}\right), \left(\frac{\mu}{\eta}+1,\frac{\nu}{\eta}\right)\\
	\left(\frac{\zeta}{k},\frac{\xi}{k}\right), \left(\frac{\mu}{\eta}-\sigma+1,\frac{\nu}{\eta}\right) \end{array}\left|k^{(q-\frac{\xi}{k})}x^{\nu}\right.
\right]. \endaligned \end{eqnarray}
\end{theorem}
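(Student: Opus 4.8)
The plan is to expand the $k$-Mittag--Leffler function by its defining series \eqref{k-ML}, so that
\begin{equation*}
t^{\mu}E^{\vartheta,q}_{k,\xi,\zeta}(t^{\nu})=\sum_{m=0}^{\infty}\frac{(\vartheta)_{mq,k}}{\Gamma_k(m\xi+\zeta)\,m!}\,t^{\mu+m\nu},
\end{equation*}
and then to apply ${}^{\eta}D^{\sigma}_{0+}$ term by term. This reduces the whole computation to evaluating ${}^{\eta}D^{\sigma}_{0+}$ on a single power $t^{\rho}$.

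First I would establish the auxiliary formula
\begin{equation*}
\left({}^{\eta}D^{\sigma}_{0+}t^{\rho}\right)(x)=\eta^{\sigma}\,\frac{\Gamma\!\left(\tfrac{\rho}{\eta}+1\right)}{\Gamma\!\left(\tfrac{\rho}{\eta}-\sigma+1\right)}\,x^{\rho-\sigma\eta}.
\end{equation*}
To prove it, take $f(t)=t^{\rho}$ in \eqref{I1} with $\sigma$ replaced by $n-\sigma$, where $n=[\Re(\sigma)]+1$; the substitution $t=x\,u^{1/\eta}$ turns the integral into the Beta integral $\int_{0}^{1}u^{\rho/\eta}(1-u)^{n-\sigma-1}\,du=B\!\left(\tfrac{\rho}{\eta}+1,\,n-\sigma\right)$, whence $\left({}^{\eta}I^{n-\sigma}_{0+}t^{\rho}\right)(x)=\eta^{\sigma-n}\,\dfrac{\Gamma(\rho/\eta+1)}{\Gamma(\rho/\eta+n-\sigma+1)}\,x^{\rho+\eta(n-\sigma)}$. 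Since $x^{1-\eta}\tfrac{d}{dx}x^{a}=a\,x^{a-\eta}$, iterating gives $\left(x^{1-\eta}\tfrac{d}{dx}\right)^{n}x^{a}=\eta^{n}\dfrac{\Gamma(a/\eta+1)}{\Gamma(a/\eta+1-n)}x^{a-n\eta}$ (which is \eqref{O4a} in disguise); taking $a=\rho+\eta(n-\sigma)$, the two $\Gamma$-quotients telescope, $\eta^{\sigma-n}\eta^{n}=\eta^{\sigma}$, and all dependence on $n$ cancels, yielding the displayed identity.

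Now substitute $\rho=\mu+m\nu$, multiply by the series coefficient, and sum over $m$; the interchange of ${}^{\eta}D^{\sigma}_{0+}$ with the sum is legitimate because \eqref{k-ML} converges absolutely and uniformly on compacta, so the absolutely convergent integral and the $n$ classical derivatives making up ${}^{\eta}D^{\sigma}_{0+}$ pass through the sum under the stated hypotheses $x>0$, $\eta>0$, $\Re(\sigma),\Re(\zeta),\Re(\vartheta)>0$. Rewriting the coefficient with \eqref{P6} as $(\vartheta)_{mq,k}=k^{mq}\Gamma(\vartheta/k+mq)/\Gamma(\vartheta/k)$ and \eqref{P4} as $\Gamma_k(m\xi+\zeta)=k^{(m\xi+\zeta)/k-1}\Gamma(m\xi/k+\zeta/k)$, one finds $\dfrac{(\vartheta)_{mq,k}}{\Gamma_k(m\xi+\zeta)}=\dfrac{k^{1-\zeta/k}}{\Gamma(\vartheta/k)}\,k^{m(q-\xi/k)}\,\dfrac{\Gamma(\vartheta/k+mq)}{\Gamma(\zeta/k+m\xi/k)}$, so after pulling out $\dfrac{\eta^{\sigma}k^{1-\zeta/k}}{\Gamma(\vartheta/k)}x^{\mu-\sigma\eta}$ the remaining series is
\begin{equation*}
\sum_{m=0}^{\infty}\frac{\Gamma\!\left(\tfrac{\vartheta}{k}+mq\right)\Gamma\!\left(\tfrac{\mu}{\eta}+1+m\tfrac{\nu}{\eta}\right)}{\Gamma\!\left(\tfrac{\zeta}{k}+m\tfrac{\xi}{k}\right)\Gamma\!\left(\tfrac{\mu}{\eta}-\sigma+1+m\tfrac{\nu}{\eta}\right)}\,\frac{\left(k^{q-\xi/k}x^{\nu}\right)^{m}}{m!},
\end{equation*}
which is exactly \eqref{FW} with $p=q=2$, numerator pairs $\left(\tfrac{\vartheta}{k},q\right),\left(\tfrac{\mu}{\eta}+1,\tfrac{\nu}{\eta}\right)$, denominator pairs $\left(\tfrac{\zeta}{k},\tfrac{\xi}{k}\right),\left(\tfrac{\mu}{\eta}-\sigma+1,\tfrac{\nu}{\eta}\right)$, and argument $k^{q-\xi/k}x^{\nu}$. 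Assembling the pieces gives \eqref{TH1-1}.

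The main obstacle is the bookkeeping in the power-function step --- in particular, confirming that composing $\left(x^{1-\eta}\tfrac{d}{dx}\right)^{n}$ with the fractional integral of order $n-\sigma$ produces an expression independent of the integer $n=[\Re(\sigma)]+1$, which is precisely the consistency required for ${}^{\eta}D^{\sigma}_{0+}$ to be well defined. A secondary point is checking that the resulting $\Psi$-series converges, i.e.\ that the growth condition $1+\sum\zeta_j-\sum\xi_i\ge0$ attached to \eqref{FW} is in force here (it reduces to $1+\tfrac{\xi}{k}-q\ge0$), a restriction one should record alongside the hypotheses of the theorem for completeness.
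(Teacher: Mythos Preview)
Your proof is correct and follows essentially the same route as the paper: expand $E^{\vartheta,q}_{k,\xi,\zeta}$ termwise, reduce to the action of ${}^{\eta}D^{\sigma}_{0+}$ on a single power via the substitution $t^{\eta}=x^{\eta}z$ and a Beta integral, convert $(\vartheta)_{mq,k}$ and $\Gamma_k$ to ordinary $\Gamma$'s, and recognize the resulting series as ${}_2\Psi_2$. Your treatment is in fact more careful than the paper's, which tacitly works with $n=1$ in \eqref{d1}, whereas you verify the power-function identity for general $n=[\Re(\sigma)]+1$ and note the Fox--Wright convergence constraint $1+\xi/k-q\ge 0$.
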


\begin{proof} For convenience, we denote the left-hand side of the result \eqref{TH1-1} by $\mathscr{D}$. Using \eqref{k-ML}, and then changing the order of integration and summation, then

\begin{eqnarray}\label{TH1-2} \aligned& \mathscr{D}=\sum_{r=0}^{\infty}\frac{(\vartheta)_{rq,k}}{\Gamma_{k}(r\xi+\zeta)}\frac{1}{r!}\left({}^{\eta}D^{\sigma}_{0+}t^{r\nu+\mu}\right), \endaligned \end{eqnarray}

applying the result \eqref{d1}, the above equation \eqref{TH1-2} reduced to

\begin{eqnarray}\label{TH1-3}\aligned& \mathscr{D}=\sum_{r=0}^{\infty}\frac{(\vartheta)_{rq,k}}{\Gamma_{k}(r\xi+\zeta)}\frac{1}{r!} \frac{{\eta}^{\sigma}}{\Gamma(1-\sigma)}\left(x^{1-\eta}\frac{d}{dx}\right)\int^{x}_{0}\frac{t^{\eta-1}t^{r\nu+\mu}}{(x^{\eta}-t^{\eta})^{\sigma}}dt. \endaligned \end{eqnarray}

Put $t^{\eta}=x^{\eta}z$ in equation \eqref{TH1-3} and by proper substitution we have

\begin{eqnarray}\label{TH1-4}\aligned& \mathscr{D}=\sum_{r=0}^{\infty}\frac{{\eta}^{\sigma}}{\Gamma(1-\sigma)}k^{1-\frac{\zeta}{k}}\frac{1}{r!}\frac{\Gamma(\frac{\vartheta}{k}+rq)}{\Gamma(\frac{\vartheta}{k})\Gamma(\frac{\zeta}{k}+\frac{\xi}{k}r)} k^{r(q-\frac{\xi}{k})}\frac{1}{\eta}\left(x^{1-\eta}\frac{d}{dx}\right) \\&\hskip 10mm \times\left( x^{\eta+r\nu+\mu-\sigma\eta} \int_{0}^{1} z^{\frac{r \nu+\mu}{\eta}}(1-z)^{-\sigma}dz\right). \endaligned \end{eqnarray}

\begin{eqnarray}\label{TH1-5}\aligned& \mathscr{D}=\sum_{r=0}^{\infty}\frac{{\eta}^{\sigma}}{\Gamma(1-\sigma)}k^{1-\frac{\zeta}{k}}\frac{1}{r!}\frac{\Gamma(\frac{\vartheta}{k}+rq)}{\Gamma(\frac{\vartheta}{k})\Gamma(\frac{\zeta}{k}+\frac{\xi}{k}r)} k^{r(q-\frac{\xi}{k})} \times\left( x^{r\nu+\mu-\sigma\eta}  \frac{\Gamma\left(\frac{\mu}{\eta}+1+\frac{\nu}{\eta}r\right)\Gamma(1-\sigma)}{\Gamma\left(\frac{\mu}{\eta}+1-\sigma+\frac{\nu}{\eta}r\right)}\right), \endaligned \end{eqnarray}

after simplification, the above equation \eqref{TH1-5} reduces to

\begin{eqnarray}\label{TH1-6}\aligned& \mathscr{D}=x^{\mu-\sigma\eta}\frac{{\eta}^\sigma}{\Gamma\left(\frac{\vartheta}{k}\right)}k^{1-\frac{\zeta}{k}}\sum_{r=0}^{\infty}\frac{1}{r!}\frac{\Gamma(\frac{\vartheta}{k}+rq)}{\Gamma(\frac{\zeta}{k}+\frac{\xi}{k}r)} \frac{\Gamma\left(\frac{\mu}{\eta}+1+\frac{\nu}{\eta}r\right)}{\Gamma\left(\frac{\mu}{\eta}+1-\sigma+\frac{\nu}{\eta}r\right)}  k^{r(q-\frac{\xi}{k})}.\endaligned \end{eqnarray}

By using equation \eqref{TH1-6} and simplification, we have

\begin{eqnarray}\label{TH1-7}\aligned& \left({}^{\eta}D^{\sigma}_{0+}t^\mu E^{\vartheta, q}_{k,\xi,\zeta}(t^\nu)\right)(x)=x^{\mu-\sigma\eta}\frac{{\eta}^\sigma}{\Gamma\left(\frac{\vartheta}{k}\right)}k^{1-\frac{\zeta}{k}}{}_{2}\Psi_{2}\left[
 \begin{array}{cc} 
\left(\frac{\vartheta}{k},{q}\right), \left(\frac{\mu}{\eta}+1,\frac{\nu}{\eta}\right)\\
	\left(\frac{\zeta}{k},\frac{\xi}{k}\right), \left(\frac{\mu}{\eta}-\sigma+1,\frac{\nu}{\eta}\right) \end{array}\left|k^{(q-\frac{\xi}{k})}x^{\nu}\right.
\right]. \endaligned \end{eqnarray}

 \end{proof}

\begin{theorem}\label{TH2} Let $x>0, \sigma,\mu, \eta, \xi,\zeta,\vartheta  \in \mathbb{C}, k \in\mathbb{R}$, $\Re(\sigma)>0, \Re(\zeta)>0, \Re(\vartheta)>0$ and $q\in\mathbb{R}^+$, such that  $\eta>0$, then

\begin{eqnarray}\label{TH2-1}\aligned& \left({}^{\eta}D^{\sigma}_{0-}t^\mu E^{\vartheta, q}_{k,\xi,\zeta}(t^\nu)\right)(x)=(-1)^{-\sigma} x^{\mu-\sigma\eta}\frac{{\eta}^\sigma}{\Gamma\left(\frac{\vartheta}{k}\right)}k^{1-\frac{\zeta}{k}}{}_{2}\Psi_{2}\left[
 \begin{array}{cc} 
\left(\frac{\vartheta}{k},{q}\right), \left(\frac{\mu}{\eta}+1,\frac{\nu}{\eta}\right)\\
	\left(\frac{\zeta}{k},\frac{\xi}{k}\right), \left(\frac{\mu}{\eta}-\sigma+1,\frac{\nu}{\eta}\right) \end{array}\left|k^{(q-\frac{\xi}{k})}x^{\nu}\right.
\right]. \endaligned \end{eqnarray}
\end{theorem}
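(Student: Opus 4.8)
The plan is to mirror the proof of Theorem~\ref{TH1} step by step, the only new feature being the bookkeeping of the sign produced by the right-sided operator. Denote the left-hand side of \eqref{TH2-1} by $\mathscr{D}^{-}$. Substituting the series \eqref{k-ML} for $E^{\vartheta,q}_{k,\xi,\zeta}(t^{\nu})$ and interchanging summation with the operator ${}^{\eta}D^{\sigma}_{0-}$ (licit by the absolute and locally uniform convergence of the series, exactly as in the passage from \eqref{TH1-2} to \eqref{TH1-3}), one reduces everything to the single monomial evaluation ${}^{\eta}D^{\sigma}_{0-}t^{r\nu+\mu}$ for each $r\ge 0$. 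Thus the proof hinges on computing ${}^{\eta}D^{\sigma}_{0-}t^{\rho}$ for a generic exponent $\rho=r\nu+\mu$.

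To carry out that evaluation I would take $n=1$ (matching the reduction already used in Theorem~\ref{TH1}, valid for $0<\Re(\sigma)<1$), apply the definition \eqref{d2} from Lemma~\ref{l-4}, and substitute $t^{\eta}=x^{\eta}z$ in the integral $\int_{x}^{0}\frac{t^{\eta-1}t^{\rho}}{(t^{\eta}-x^{\eta})^{\sigma}}\,dt$. Since $x<0$, the reversal of the integration limits together with the branch of $(t^{\eta}-x^{\eta})^{-\sigma}=\left(-(x^{\eta}-t^{\eta})\right)^{-\sigma}$ along the integration path contributes the factor $(-1)^{-\sigma}$, while the remaining integral becomes a constant multiple of the Beta integral $\int_{0}^{1}z^{\rho/\eta}(1-z)^{-\sigma}\,dz=B\!\left(\tfrac{\rho}{\eta}+1,\,1-\sigma\right)$. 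Differentiating the resulting power of $x$ by means of \eqref{O4a} and cancelling the factor $\Gamma(1-\sigma)$ coming from the Beta function against the $\Gamma(1-\sigma)$ in the denominator of \eqref{d2}, the $r$-th summand collapses to
\[
(-1)^{-\sigma}\,\eta^{\sigma}\,x^{\rho-\sigma\eta}\,\frac{\Gamma\!\left(\tfrac{\mu}{\eta}+1+\tfrac{\nu}{\eta}r\right)}{\Gamma\!\left(\tfrac{\mu}{\eta}+1-\sigma+\tfrac{\nu}{\eta}r\right)}\,\frac{(\vartheta)_{rq,k}}{\Gamma_{k}(r\xi+\zeta)\,r!}.
\]

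Finally I would convert $(\vartheta)_{rq,k}$ through \eqref{P6} and $\Gamma_{k}(r\xi+\zeta)$ through \eqref{P4} into ordinary Pochhammer symbols and Gamma functions, factor out the $r$-independent constants $(-1)^{-\sigma}$, $x^{\mu-\sigma\eta}$, $\eta^{\sigma}$, $1/\Gamma(\vartheta/k)$ and $k^{1-\zeta/k}$, and gather the surviving powers of $k$ and of $x^{\nu}$ into the single argument $k^{(q-\xi/k)}x^{\nu}$. The remaining series is precisely the Fox--Wright function ${}_{2}\Psi_{2}$ of \eqref{FW} with numerator parameters $\left(\tfrac{\vartheta}{k},q\right),\left(\tfrac{\mu}{\eta}+1,\tfrac{\nu}{\eta}\right)$ and denominator parameters $\left(\tfrac{\zeta}{k},\tfrac{\xi}{k}\right),\left(\tfrac{\mu}{\eta}-\sigma+1,\tfrac{\nu}{\eta}\right)$, which yields \eqref{TH2-1}. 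The one genuinely delicate point, in contrast with Theorem~\ref{TH1}, is making sure that the reversed orientation of the integral, the minus sign inside $\left(-x^{1-\eta}\frac{d}{dx}\right)$, and the chosen branch of the negative power for $x<0$ combine to give exactly one factor $(-1)^{-\sigma}$ and nothing further; every other step is the routine computation already performed in the proof of Theorem~\ref{TH1}.
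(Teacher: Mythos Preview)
Your proposal is correct and follows essentially the same route as the paper's proof: expand the $k$-Mittag--Leffler function termwise, apply the right-sided operator \eqref{d2} with $n=1$ to each monomial, perform the substitution $t^{\eta}=x^{\eta}z$ to reduce to a Beta integral, and then reassemble the resulting Gamma quotients into the ${}_{2}\Psi_{2}$ series. You are in fact more explicit than the paper about the origin of the factor $(-1)^{-\sigma}$, which the paper simply records at step \eqref{TH2-4} without comment.
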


\begin{proof} For convenience, we denote the left-hand side of the result \eqref{TH2-1} by $\mathscr{D}$. Using \eqref{k-ML}, and then changing the order of integration and summation, then

\begin{eqnarray}\label{TH2-2} \aligned& \mathscr{D}=\sum_{r=0}^{\infty}\frac{(\vartheta)_{rq,k}}{\Gamma_{k}(r\xi+\zeta)}\frac{1}{r!}\left({}^{\eta}D^{\sigma}_{0-}t^{r\nu+\mu}\right), \endaligned \end{eqnarray}

applying the result \eqref{d2}, the above equation \eqref{TH2-2} reduced to

\begin{eqnarray}\label{TH2-3}\aligned& \mathscr{D}=\sum_{r=0}^{\infty}\frac{(\vartheta)_{rq,k}}{\Gamma_{k}(r\xi+\zeta)}\frac{1}{r!} \frac{{\eta}^{\sigma}}{\Gamma(1-\sigma)}\left(-x^{1-\eta}\frac{d}{dx}\right)\int^{0}_{x}\frac{t^{\eta-1}t^{r\nu+\mu}}{(t^{\eta}-x^{\eta})^{\sigma}}dt. \endaligned \end{eqnarray}

Put $t^{\eta}=x^{\eta}z$ in equation \eqref{TH2-3} and by proper substitution we have

\begin{eqnarray}\label{TH2-4}\aligned& \mathscr{D}=(-1)^{-\sigma}\sum_{r=0}^{\infty}\frac{{\eta}^{\sigma}}{\Gamma(1-\sigma)}k^{1-\frac{\zeta}{k}}\frac{1}{r!}\frac{\Gamma(\frac{\vartheta}{k}+rq)}{\Gamma(\frac{\vartheta}{k})\Gamma(\frac{\zeta}{k}+\frac{\xi}{k}r)} k^{r(q-\frac{\xi}{k})}\frac{1}{\eta}\left(x^{1-\eta}\frac{d}{dx}\right) \\&\hskip 10mm \times\left( x^{\eta+r\nu+\mu-\sigma\eta} \int_{0}^{1} z^{\frac{r \nu+\mu}{\eta}}(1-z)^{-\sigma}dz\right). \endaligned \end{eqnarray}

\begin{eqnarray}\label{TH2-5}\aligned& \mathscr{D}=(-1)^{-\sigma}\sum_{r=0}^{\infty}\frac{{\eta}^{\sigma}}{\Gamma(1-\sigma)}k^{1-\frac{\zeta}{k}}\frac{1}{r!}\frac{\Gamma(\frac{\vartheta}{k}+rq)}{\Gamma(\frac{\vartheta}{k})\Gamma(\frac{\zeta}{k}+\frac{\xi}{k}r)} k^{r(q-\frac{\xi}{k})} \left( x^{r\nu+\mu-\sigma\eta}  \frac{\Gamma\left(\frac{\mu}{\eta}+1+\frac{\nu}{\eta}r\right)\Gamma(1-\sigma)}{\Gamma\left(\frac{\mu}{\eta}+1-\sigma+\frac{\nu}{\eta}r\right)}\right), \endaligned \end{eqnarray}

after simplification, the above equation \eqref{TH2-5} reduces to

\begin{eqnarray}\label{TH2-6}\aligned& \mathscr{D}=(-1)^{-\sigma}x^{\mu-\sigma\eta}\frac{{\eta}^\sigma}{\Gamma\left(\frac{\vartheta}{k}\right)}k^{1-\frac{\zeta}{k}}\sum_{r=0}^{\infty}\frac{1}{r!}\frac{\Gamma(\frac{\vartheta}{k}+rq)}{\Gamma(\frac{\zeta}{k}+\frac{\xi}{k}r)} \frac{\Gamma\left(\frac{\mu}{\eta}+1+\frac{\nu}{\eta}r\right)}{\Gamma\left(\frac{\mu}{\eta}+1-\sigma+\frac{\nu}{\eta}r\right)}  k^{r(q-\frac{\xi}{k})}.\endaligned \end{eqnarray}

By using equation \eqref{TH2-6} and simplification, we have

\begin{eqnarray}\label{TH2-7}\aligned& \left({}^{\eta}D^{\sigma}_{0+}t^\mu E^{\vartheta, q}_{k,\xi,\zeta}(t^\nu)\right)(x)=(-1)^{-\sigma}x^{\mu-\sigma\eta}\frac{{\eta}^\sigma}{\Gamma\left(\frac{\vartheta}{k}\right)}k^{1-\frac{\zeta}{k}}{}_{2}\Psi_{2}\left[
 \begin{array}{cc} 
\left(\frac{\vartheta}{k},{q}\right), \left(\frac{\mu}{\eta}+1,\frac{\nu}{\eta}\right)\\
	\left(\frac{\zeta}{k},\frac{\xi}{k}\right), \left(\frac{\mu}{\eta}-\sigma+1,\frac{\nu}{\eta}\right) \end{array}\left|k^{(q-\frac{\xi}{k})}x^{\nu}\right.
\right]. \endaligned \end{eqnarray}
\end{proof}

\subsection{Numerical results and graphical interpretation}
In this section we plot the graphs and obtained the numerical value of our findings in equation \eqref{TH1-1} and \eqref{TH2-1}. For this purpose, we select the values of the parameters involving in these results as $\mu=0.5; \nu=0.8; \eta=0.3; \xi=0.5;\zeta=0.2;\vartheta=0.5; q=0.4; k=0.5$ and $\sigma=0.1:0.1:0.4$ for Figure \ref{ig-1}. In Figure \ref{fig-2} the values of the figure are taken as $\eta=1:2:7;\sigma=0.02$
\newpage
\begin{figure}[h]
    \centering
       \fbox{\subfigure[Plot of Equation \eqref{TH2-1}]{\includegraphics[width=7.8 cm,height=7.6cm]{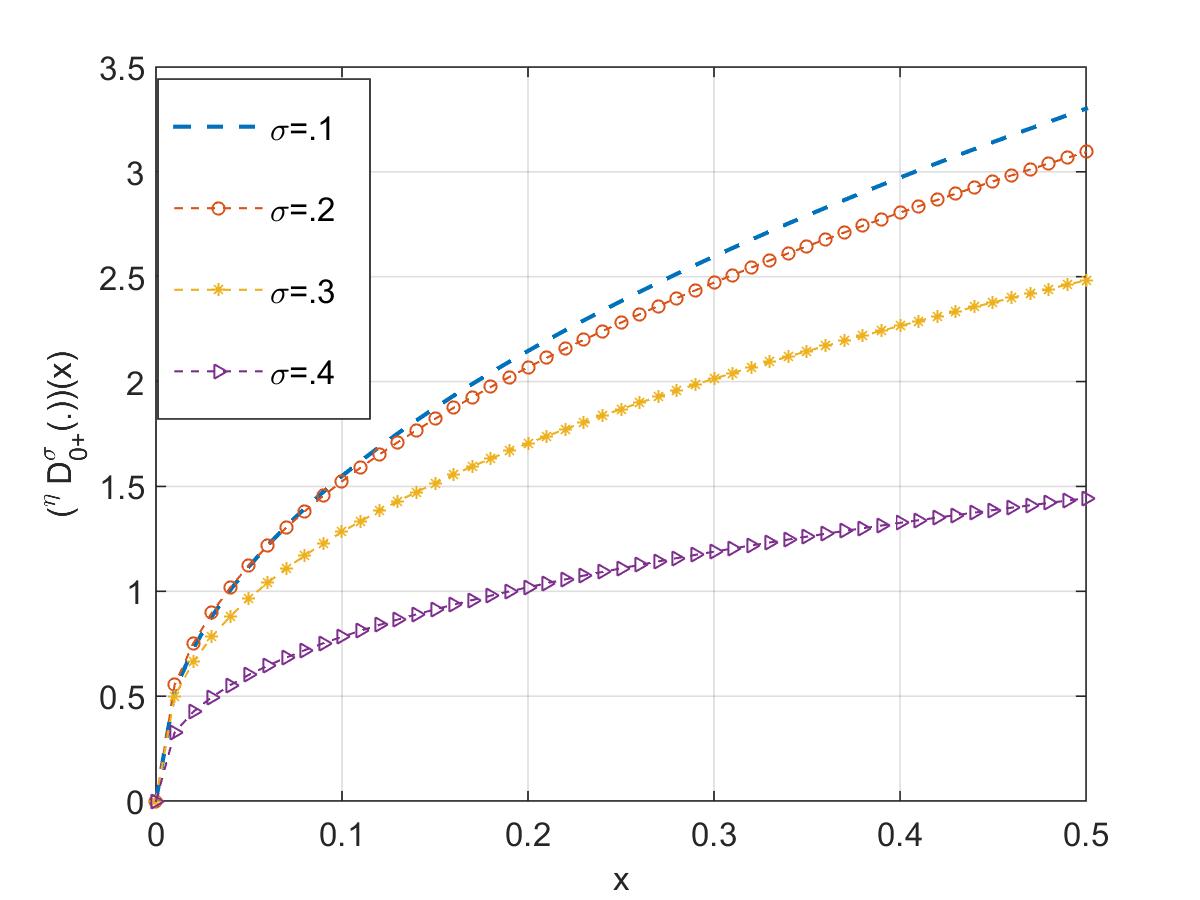}}
        \subfigure[Plot of Equation \eqref{TH1-1}]{\includegraphics[width=7.8 cm,height=7.6cm]{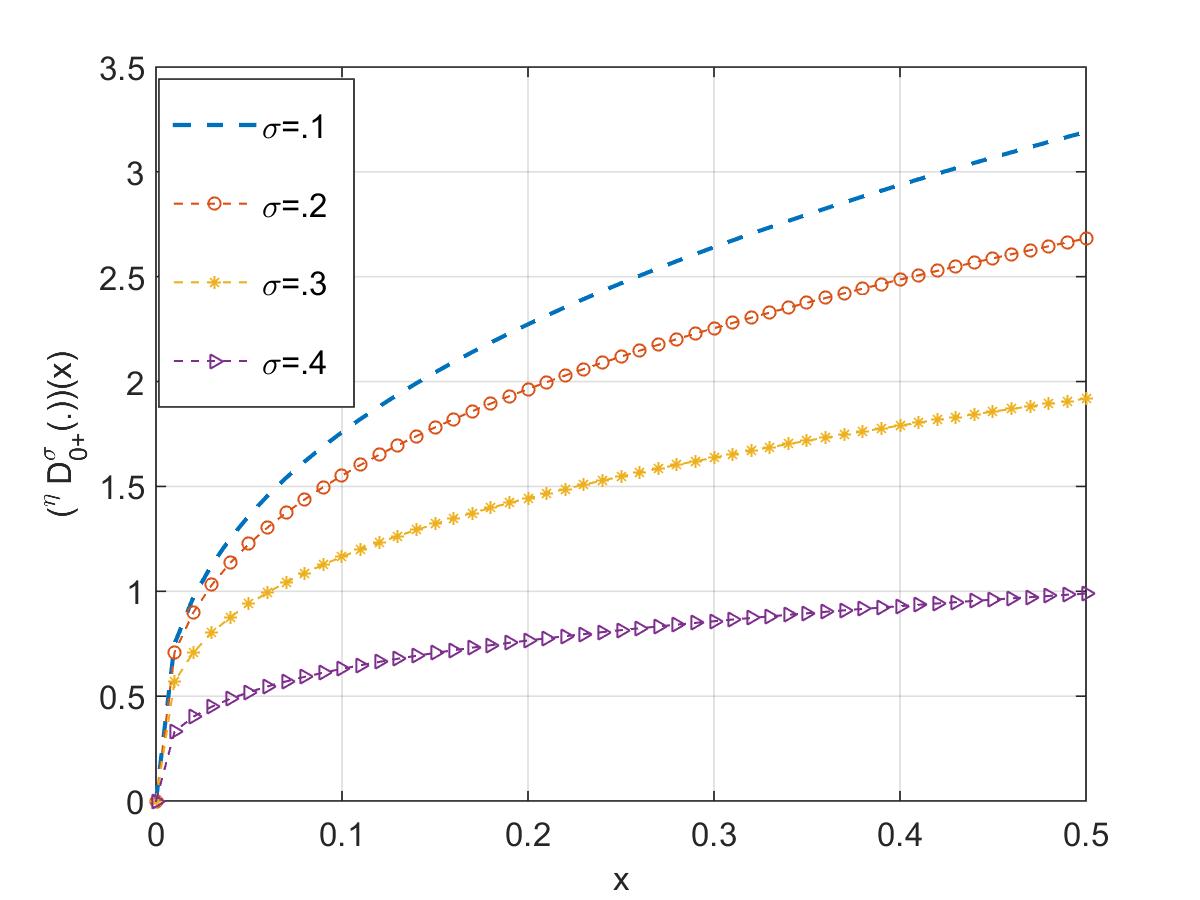}}}
        \caption{Graph for the values $\sigma=0.1:0.1:0.4;\eta=0.2$} \label{fig-1}
    \end{figure}

\begin{figure}[h]
    \centering
       \fbox{\subfigure[Plot of Equation \eqref{TH2-1}]{\includegraphics[width=7.8 cm,height=7.6cm]{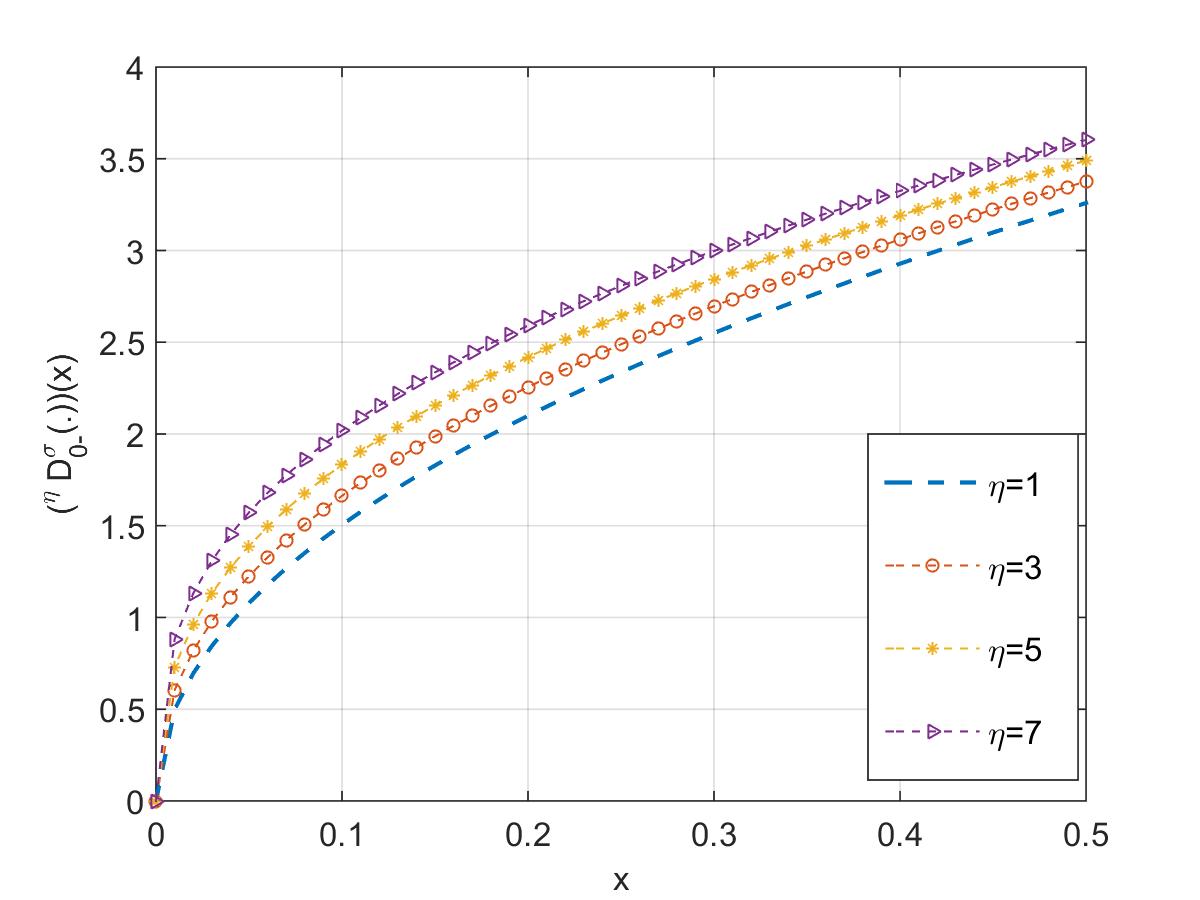}}
        \subfigure[Plot of Equation \eqref{TH1-1}]{\includegraphics[width=7.8 cm,height=7.6cm]{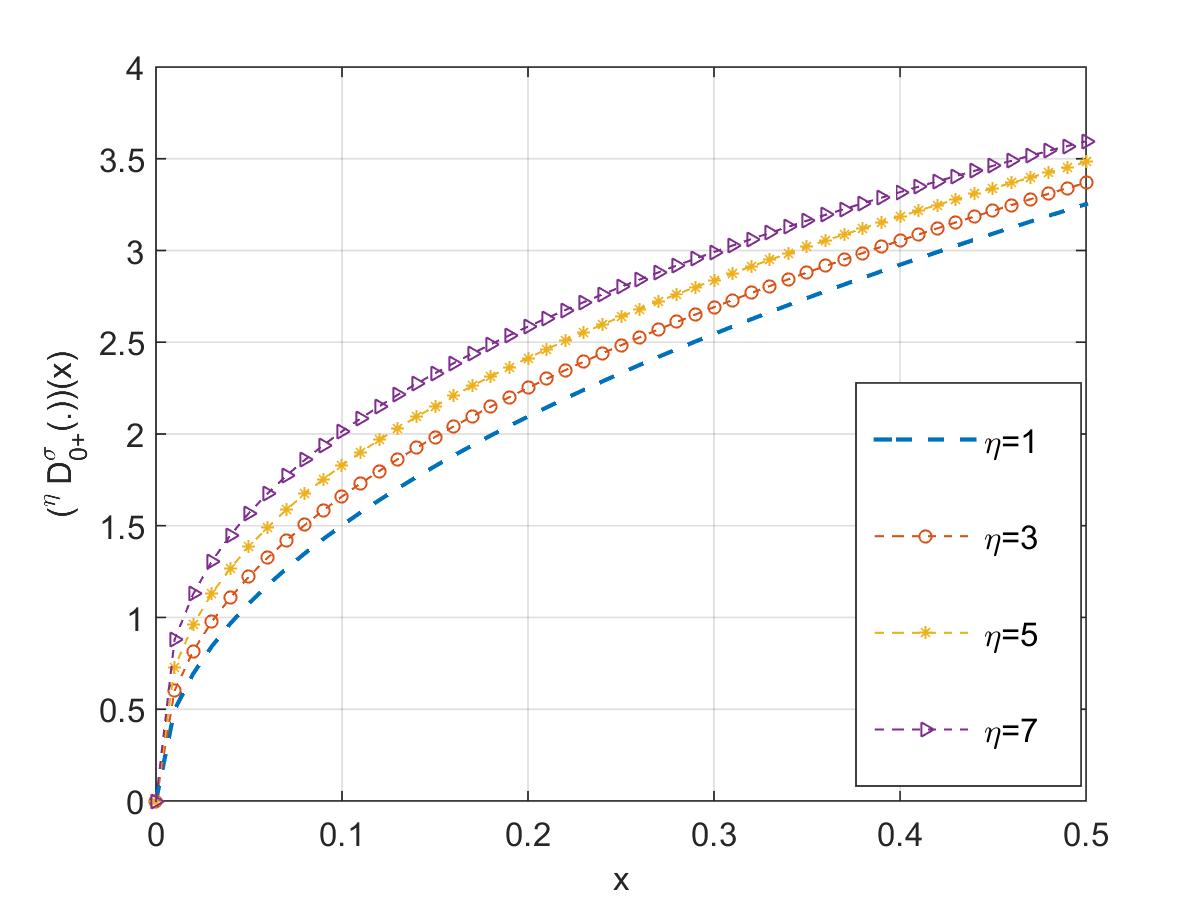}}}
        \caption{Graph for the values $\eta=1:2:7;\sigma=0.02$} \label{fig-2}
    \end{figure}

\begin{figure}[h]
       \fbox{\subfigure[Plot of Equation \eqref{TH2-1}]{\includegraphics[width=7.8 cm,height=7.6cm]{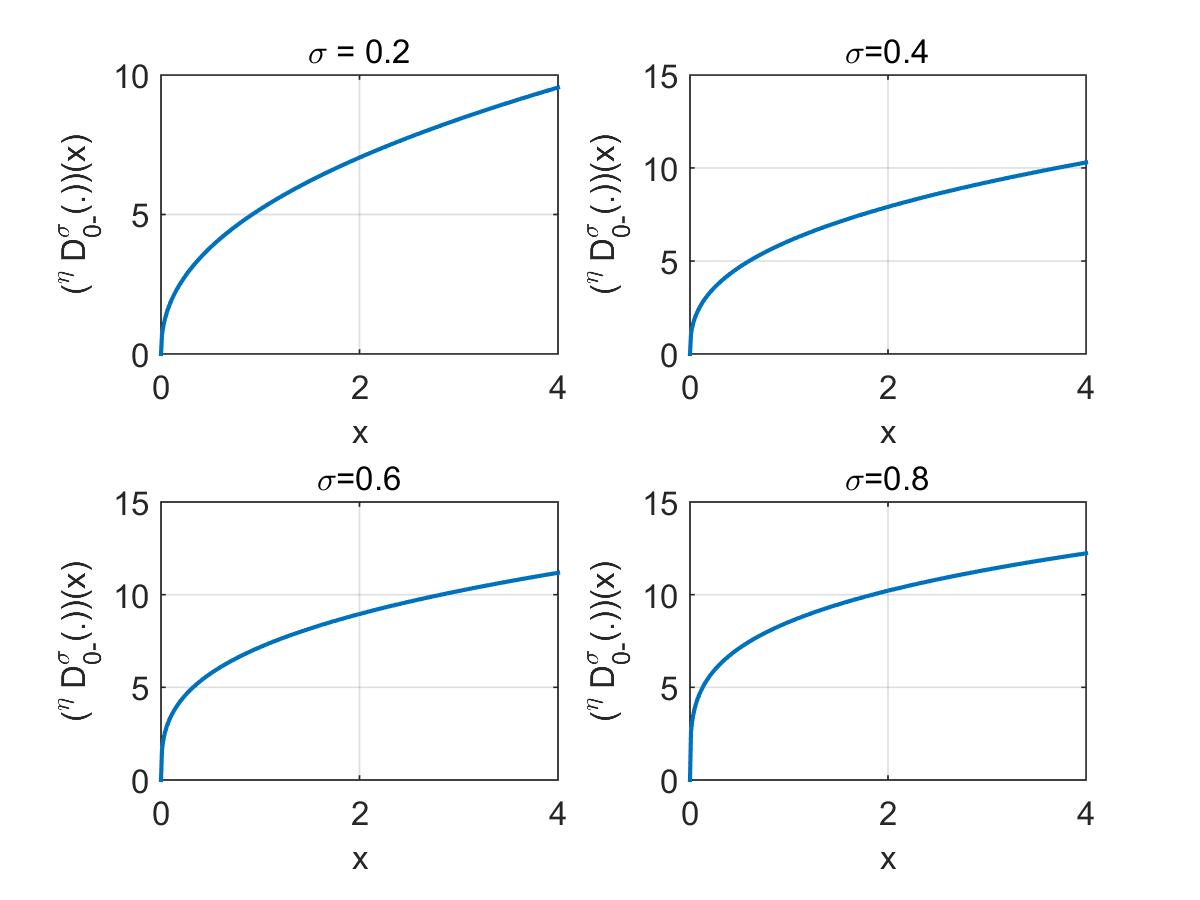}}\\
        \subfigure[Plot of Equation \eqref{TH1-1}]{\includegraphics[width=7.8 cm,height=7.6cm]{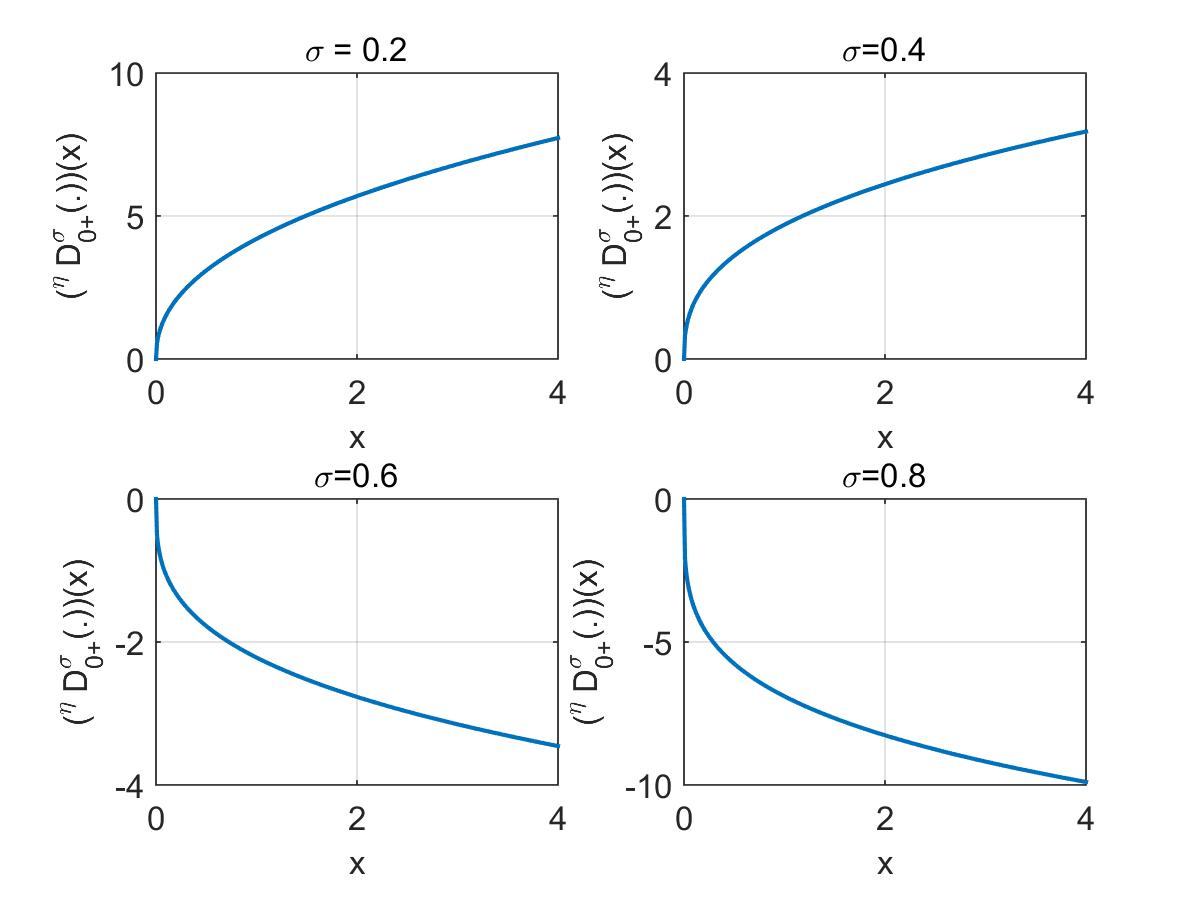}}}
        \caption{Graph  for $\sigma=0.02;\zeta=0.2$} \label{fig-3}
    \end{figure}

\begin{figure}[htb]
    \centering
       \fbox{\subfigure[Plot of Equation \eqref{TH2-1}]{\includegraphics[width=7.8 cm,height=7.6cm]{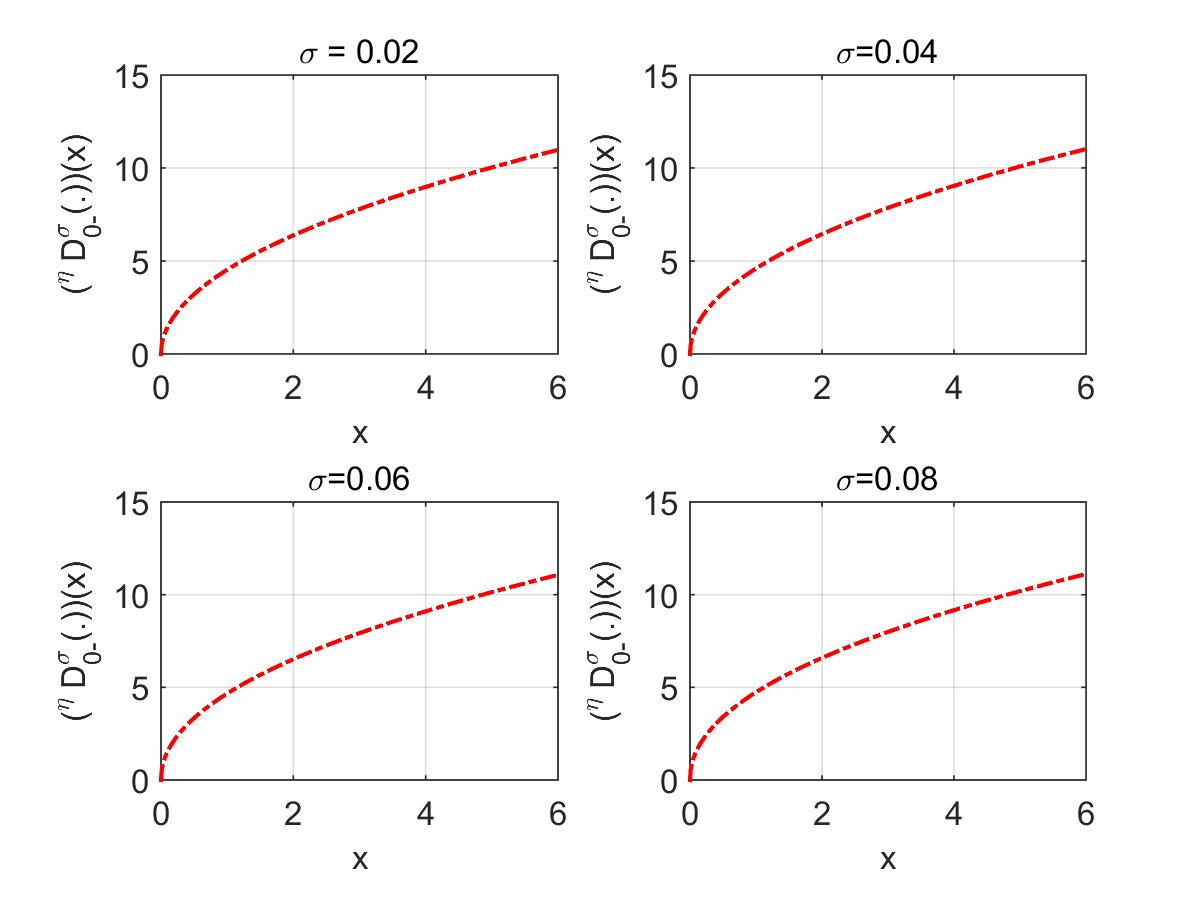}}
        \subfigure[Plot of Equation \eqref{TH1-1}]{\includegraphics[width=7.8 cm,height=7.6cm]{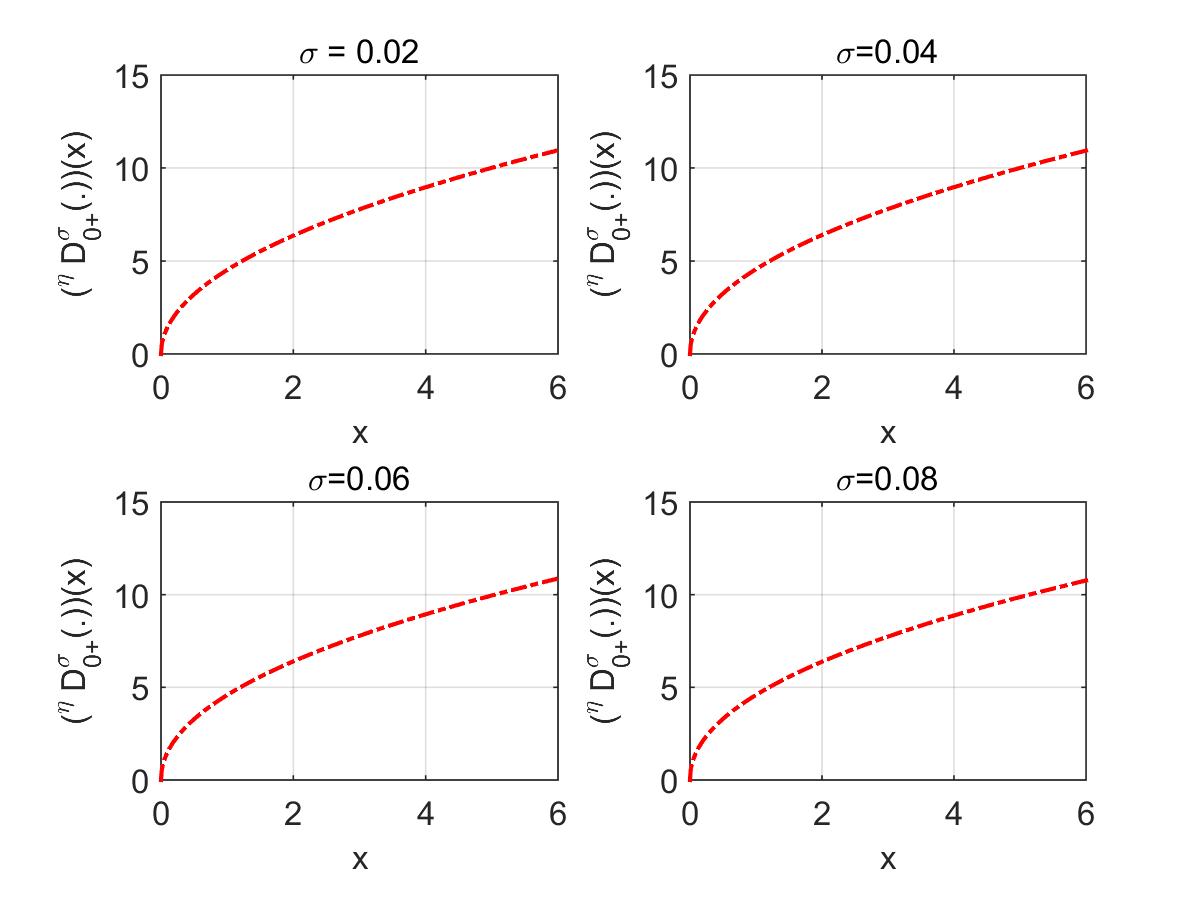}}}
        \caption{Graph  for $\eta=0.3;\sigma=0.02:0.02:0.08$} \label{fig-4}
    \end{figure}

\begin{figure}[htb]
    \centering
       \fbox{\subfigure[Plot of Equation \eqref{TH2-1}]{\includegraphics[width=7.8 cm,height=7.6cm]{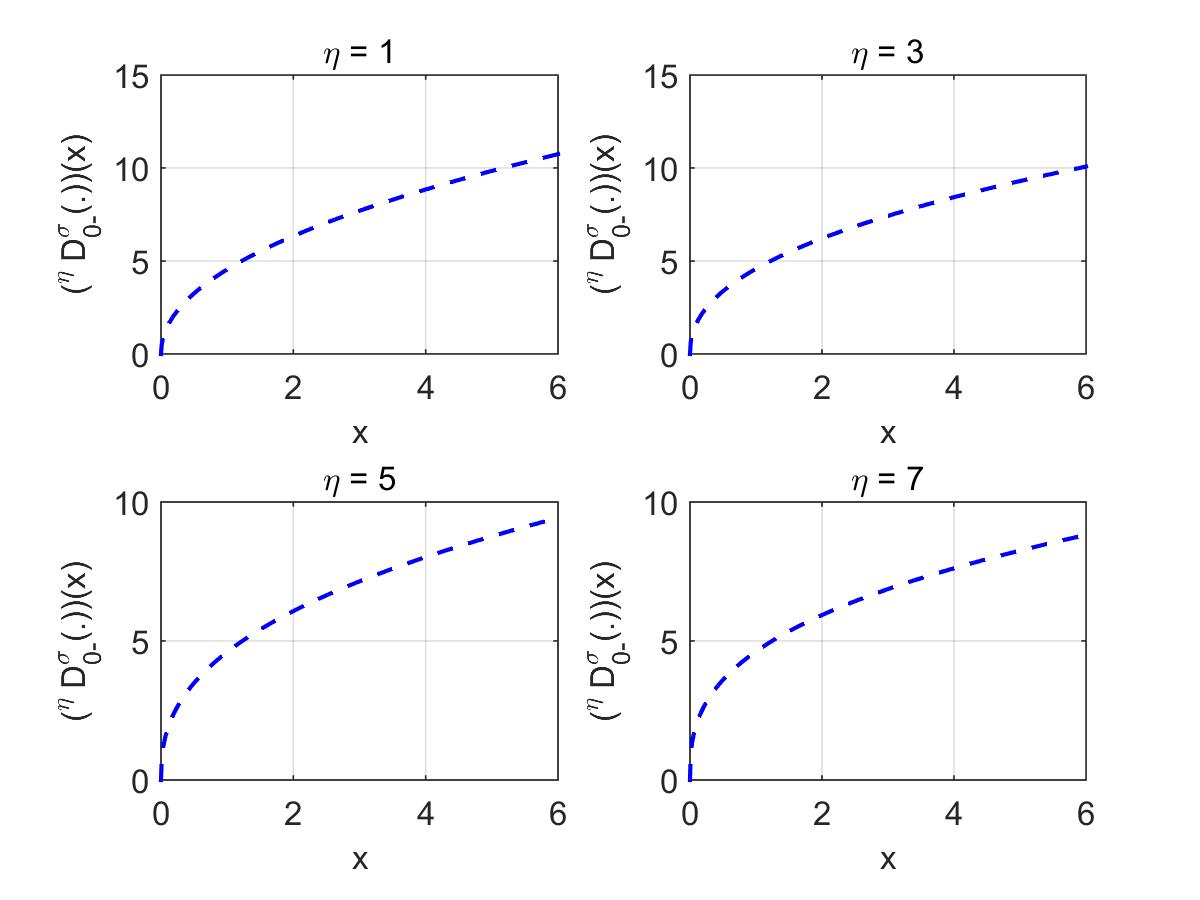}}
        \subfigure[Plot of Equation \eqref{TH1-1}]{\includegraphics[width=7.8 cm,height=7.6cm]{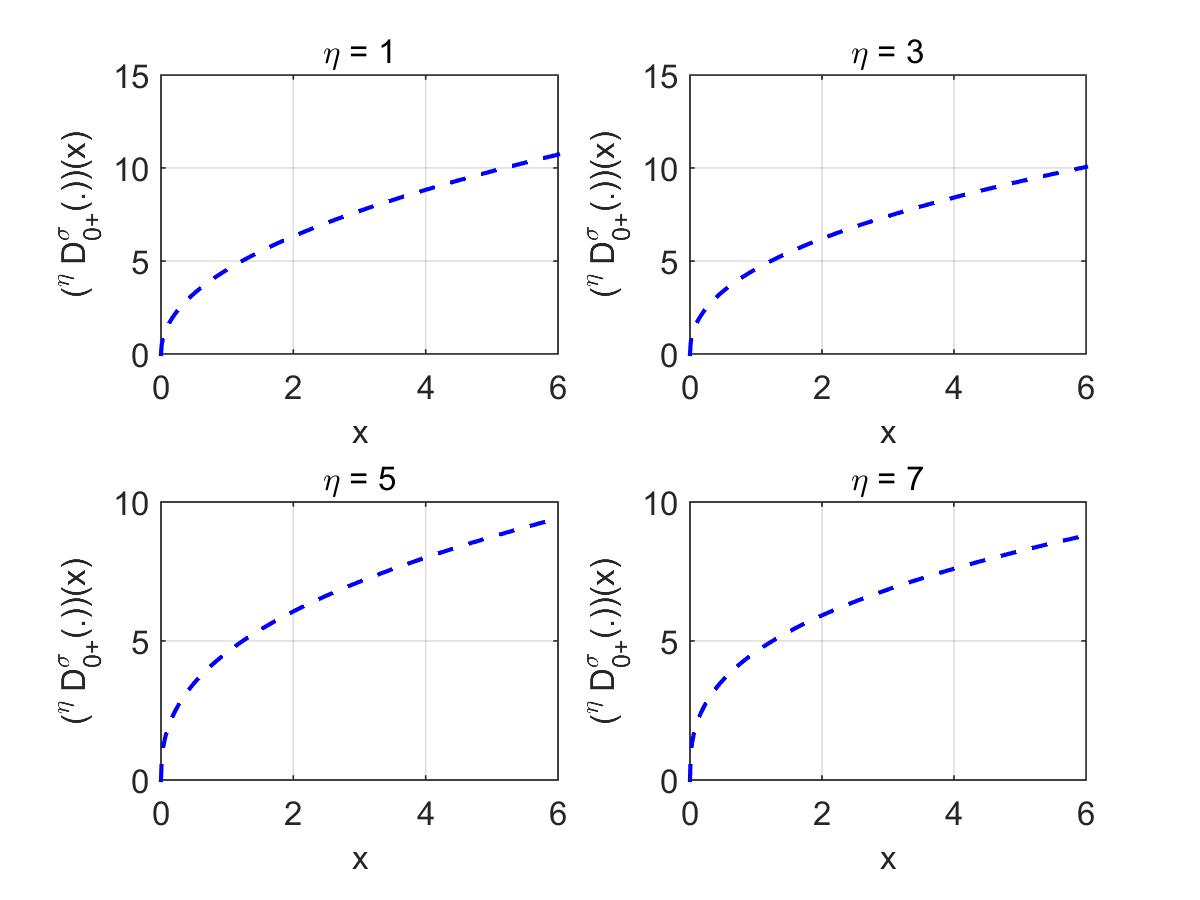}}}
        \caption{Graph  for $\sigma=0.02;\eta=1:2:7$} \label{fig-5}
    \end{figure}

\begin{table}[ht]\caption{Numerical Values of the Equation \eqref{TH1-1}}\label{table-1}
 \centering
\setlength{\tabcolsep}{11pt}
\renewcommand{\arraystretch}{1}
\begin{tabular}{ccccc}
\hline
 $x$ & $\sigma=0.1$ & $\sigma=0.2$ & $\sigma=0.3$ & $\sigma=0.4$ \\
\hline
 		0.00  &        0.00&          0.00  &        0.00  &        0.00\\
          0.50 &         3.30&          3.09&          2.48 &         1.44\\
          1.00&          4.57&          4.20&          3.30 &         1.88\\
          1.50&          5.53&          5.02&          3.90 &         2.19\\
          2.00&          6.33&          5.70&          4.38&          2.44\\
          2.50&          7.03&          6.28&          4.80&          2.66\\
          3.00&          7.66&          6.81&          5.18&          2.85\\
          3.50&          8.24&          7.29&          5.51&          3.02\\
          4.00&          8.77&          7.73&          5.82&          3.18\\
          4.50&          9.27&          8.14&          6.11&          3.33\\
          5.00&          9.74&          8.52&          6.38&          3.46\\
          5.50&         10.19&          8.89&          6.64&          3.59\\
          6.00&         10.61&          9.24&          6.88&          3.71\\
          6.50&         11.02&          9.57&          7.11&          3.83\\
          7.00&         11.41&          9.88&          7.32&          3.94\\
          7.50&         11.79&         10.19&          7.54&          4.04\\
          8.00&         12.15&         10.48&          7.74&          4.14\\
          8.50&         12.50&         10.77&          7.93&          4.24\\
          9.00&         12.84&         11.04&          8.12&          4.33\\
          9.50&         13.17&         11.31&          8.30&          4.42\\
         10.00&         13.49&         11.56&          8.48&          4.51\\
\hline
\end{tabular}
\end{table}

\begin{table}[ht]\caption{Numerical Values of the Equation \eqref{TH2-1}}\label{table-2}
 \centering
\setlength{\tabcolsep}{11pt}
\renewcommand{\arraystretch}{1}
\begin{tabular}{ccccc}
\hline
$ x$ & $\sigma=0.1$ & $\sigma=0.2$ & $\sigma=0.3$ & $\sigma=0.4$ \\
\hline
 0 &            0&             0&             0&             0\\
          0.50&          3.47&          3.83&          4.22&          4.67\\
          1.00&          4.81&          5.19&          5.61&          6.08\\
          1.50&          5.82&          6.20&          6.63&          7.09\\
          2.00&          6.66&          7.04&          7.46&          7.91\\
          2.50&          7.40&          7.77&          8.17&          8.61\\
          3.00&          8.06&          8.42&          8.80&          9.23\\
          3.50&          8.66&          9.01&          9.38&          9.79\\
          4.00&          9.22&          9.55&          9.91&         10.30\\
          4.50&          9.75&         10.06&         10.40&         10.77\\
          5.00&         10.24&         10.54&         10.86&         11.21\\
          5.50&         10.71&         10.99&         11.29&         11.62\\
          6.00&         11.16&         11.42&         11.70&         12.01\\
          6.50&         11.59&         11.83&         12.09&         12.38\\
          7.00&         12.00&         12.22&         12.46&         12.73\\
          7.50&         12.39&         12.59&         12.82&         13.07\\
          8.00&         12.78&         12.96&         13.16&         13.40\\
          8.50&         13.15&         13.31&         13.49&         13.71\\
          9.00&         13.50&         13.65&         13.81&         14.01\\
          9.50&         13.85&         13.97&         14.12&         14.30\\
         10.00&         14.19&         14.29&         14.42&         14.58\\
\hline
\end{tabular}
\end{table}

\begin{theorem}\label{Th3} Let $x>0, \sigma, \mu, \eta, \xi,\zeta,\vartheta  \in \mathbb{C}, k \in\mathbb{R}$, $\Re(\sigma)>0, \Re(\zeta)>0, \Re(\vartheta)>0,\Re(\mu)>0, \Re (l)>0,\Re (m)>0>$ and $q\in\mathbb{R}^+$, such that  $\eta>0$, then

\begin{eqnarray}\label{TH3-1}\aligned& B\left\{\left({}^{\eta}D^{\sigma}_{0+}t^\mu E^{\vartheta, q}_{k,\xi,\zeta}(tz)^\nu\right)(x) : l, m \right\}\\&=\Gamma(m)\hskip2mmx^{\mu-\sigma\eta}\frac{{\eta}^\sigma}{\Gamma\left(\frac{\vartheta}{k}\right)}k^{1-\frac{\zeta}{k}}{}_{3}\Psi_{3}\left[
 \begin{array}{cc} 
\left(\frac{\vartheta}{k},{q}\right), \left(\frac{\mu}{\eta}+1,\frac{\nu}{\eta}\right),(l,\nu),\\
	\left(\frac{\zeta}{k},\frac{\xi}{k}\right), \left(\frac{\mu}{\eta}-\sigma+1,\frac{\nu}{\eta},\right),
	(l+m,\nu)\end{array}\left| \hskip 1mm k^{(q-\frac{\xi}{k})}x^{\nu}\right
.\right].\endaligned \end{eqnarray}
\begin{proof}
For convenience, we denote the left-hand side of the result \eqref{TH3-1} by $\mathscr{B}$. Using the definition of Beta transform, the LHS of 
\eqref{TH3-1} becomes:

\begin{eqnarray}\label{TH3-2} \aligned& \mathscr{B}=\int_{0}^{1} z^{l-1} (1-z)^{m-1} \left({}^{\eta}D^{\sigma}_{0+}t^\mu E^{\vartheta, q}_{k,\xi,\zeta}(tz)^\nu\right)(x) dz,  \endaligned \end{eqnarray}

further using \eqref{k-ML} and then changing the order of integration and summation,which is valid under the conditions of Theorem 1, then 

\begin{eqnarray}\label{TH3-3}\aligned& \mathscr{B}=\sum_{r=0}^{\infty}\frac{(\vartheta)_{rq,k}}{\Gamma_{k}(r\xi+\zeta)}\frac{1}{r!}\left({}^{\eta}D^{\sigma}_{0+}t^{r\nu+\mu}\right)(x)\int_{0}^{1}{z^{l+r \nu-1}}(1-z)^{m-1}dz, \endaligned \end{eqnarray}

applying the result \eqref{d1}, after simplification Eq.\eqref{TH3-3} reduced to 

\begin{eqnarray}\label{TH3-4}\aligned& \mathscr{B}=x^{\mu-\sigma\eta}\frac{{\eta}^\sigma}{\Gamma\left(\frac{\vartheta}{k}\right)}k^{1-\frac{\zeta}{k}}\sum_{r=0}^{\infty}\frac{1}{r!}\frac{\Gamma(\frac{\vartheta}{k}+rq)}{\Gamma(\frac{\zeta}{k}+\frac{\xi}{k}r)} \frac{\Gamma\left(\frac{\mu}{\eta}+1+\frac{\nu}{\eta}r\right)}{\Gamma\left(\frac{\mu}{\eta}+1-\sigma+\frac{\nu}{\eta}r\right)}  k^{r(q-\frac{\xi}{k})} \int_{0}^{1}{z^{l+n \nu-1}}(1-z)^{m-1}dz, \endaligned \end{eqnarray}

applying the definition of Beta transform, Eq.\eqref{TH3-4} reduced to

\begin{eqnarray}\label{TH3-5}\aligned& \mathscr{B}=x^{\mu-\sigma\eta}\frac{{\eta}^\sigma}{\Gamma\left(\frac{\vartheta}{k}\right)}k^{1-\frac{\zeta}{k}}\sum_{r=0}^{\infty}\frac{1}{r!}\frac{\Gamma(\frac{\vartheta}{k}+rq)}{\Gamma(\frac{\zeta}{k}+\frac{\xi}{k}r)} \frac{\Gamma\left(\frac{\mu}{\eta}+1+\frac{\nu}{\eta}r\right)}{\Gamma\left(\frac{\mu}{\eta}+1-\sigma+\frac{\nu}{\eta}r\right)}  k^{r(q-\frac{\xi}{k})}\frac{\Gamma(l+\nu n) \Gamma(m)}{\Gamma(l+m+\nu n)}. \endaligned \end{eqnarray}

\begin{eqnarray}\label{TH3-6}\aligned& \mathscr{B}=\Gamma(m)\hskip2mmx^{\mu-\sigma\eta}\frac{{\eta}^\sigma}{\Gamma\left(\frac{\vartheta}{k}\right)}k^{1-\frac{\zeta}{k}}{}_{3}\Psi_{3}\left[
 \begin{array}{cc} 
\left(\frac{\vartheta}{k},{q}\right), \left(\frac{\mu}{\eta}+1,\frac{\nu}{\eta}\right),(l,\nu),\\
	\left(\frac{\zeta}{k},\frac{\xi}{k}\right), \left(\frac{\mu}{\eta}-\sigma+1,\frac{\nu}{\eta},\right),
	(l+m,\nu)\end{array}\left| \hskip 1mm k^{(q-\frac{\xi}{k})}x^{\nu}\right
.\right].\endaligned \end{eqnarray}

\end{proof}
\end{theorem}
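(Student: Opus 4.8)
The plan is to compute the Beta transform of the fractional derivative obtained in Theorem~\ref{TH1} by interchanging the Beta integral with the series representation of the $k$-Mittag-Leffler function, in direct parallel with the proofs of Theorems~\ref{TH1} and~\ref{TH2}. First I would write out the Beta transform $\mathscr{B}=B\{\cdot : l,m\} = \int_0^1 z^{l-1}(1-z)^{m-1}\big({}^{\eta}D^{\sigma}_{0+}t^\mu E^{\vartheta,q}_{k,\xi,\zeta}(tz)^\nu\big)(x)\,dz$, and expand $E^{\vartheta,q}_{k,\xi,\zeta}$ using \eqref{k-ML}. Since the argument is $(tz)^\nu$, the generic term carries a factor $z^{r\nu}$, so after pulling the sum outside the integral the $z$-dependence is collected into $\int_0^1 z^{l+r\nu-1}(1-z)^{m-1}\,dz = B(l+r\nu,\,m) = \Gamma(l+r\nu)\Gamma(m)/\Gamma(l+m+r\nu)$. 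Legitimacy of the interchange follows from absolute convergence under the stated hypotheses $\Re(l)>0$, $\Re(m)>0$, $\Re(\sigma)>0$, exactly as invoked (by reference to Theorem~1) in the excerpt.

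Next I would insert the value of $\big({}^{\eta}D^{\sigma}_{0+}t^{r\nu+\mu}\big)(x)$, which is already available from the computation inside the proof of Theorem~\ref{TH1}: applying \eqref{d1} and the power rule \eqref{O4a} together with the identities \eqref{P6} and \eqref{P4} converting $k$-Pochhammer and $k$-gamma symbols to ordinary ones, one gets the prefactor $x^{\mu-\sigma\eta}\,\eta^\sigma k^{1-\zeta/k}/\Gamma(\vartheta/k)$ and the ratio $\Gamma(\vartheta/k+rq)\,\Gamma(\mu/\eta+1+\nu r/\eta)\big/\big[\Gamma(\zeta/k+\xi r/k)\,\Gamma(\mu/\eta+1-\sigma+\nu r/\eta)\big]$ times $(k^{q-\xi/k}x^\nu)^r/r!$. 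Multiplying this by the Beta factor $\Gamma(l+r\nu)\Gamma(m)/\Gamma(l+m+r\nu)$ introduces one more Gamma ratio in $r$, namely $\Gamma(l+\nu r)/\Gamma(l+m+\nu r)$, and factors $\Gamma(m)$ out front.

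Finally I would recognize the resulting series $\sum_{r\ge0}\frac{\Gamma(\vartheta/k+qr)\,\Gamma(\mu/\eta+1+(\nu/\eta)r)\,\Gamma(l+\nu r)}{\Gamma(\zeta/k+(\xi/k)r)\,\Gamma(\mu/\eta+1-\sigma+(\nu/\eta)r)\,\Gamma(l+m+\nu r)}\,\frac{(k^{q-\xi/k}x^\nu)^r}{r!}$ as the Fox-Wright function ${}_3\Psi_3$ with the numerator pairs $(\vartheta/k,q),(\mu/\eta+1,\nu/\eta),(l,\nu)$ and denominator pairs $(\zeta/k,\xi/k),(\mu/\eta-\sigma+1,\nu/\eta),(l+m,\nu)$ and argument $k^{q-\xi/k}x^\nu$, using definition \eqref{FW}; this yields \eqref{TH3-1}. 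The only genuine point requiring care — the ``main obstacle'' — is verifying the convergence/growth condition $1+\sum\zeta_j-\sum\xi_i\ge0$ for this ${}_3\Psi_3$ (here it reads $1+\xi/k+\nu/\eta+\nu-q-\nu/\eta-\nu = 1+\xi/k-q\ge0$) and justifying the term-by-term interchange of the Beta integral with the series; once the hypotheses of Theorem~\ref{TH1} are in force this reduces to dominated convergence on $[0,1]$, and the rest is the bookkeeping already carried out above.
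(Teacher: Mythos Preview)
Your proposal is correct and follows essentially the same route as the paper: write the Beta transform as $\int_0^1 z^{l-1}(1-z)^{m-1}(\cdot)\,dz$, expand $E^{\vartheta,q}_{k,\xi,\zeta}$ via \eqref{k-ML}, interchange sum and integral, evaluate the resulting Beta integral as $\Gamma(l+r\nu)\Gamma(m)/\Gamma(l+m+r\nu)$, plug in the power-function derivative from the proof of Theorem~\ref{TH1}, and identify the series with ${}_3\Psi_3$ via \eqref{FW}. Your added remarks on the Fox--Wright convergence condition and the dominated-convergence justification go slightly beyond what the paper spells out, but the underlying argument is identical.
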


\begin{theorem}\label{Th4} Let $x>0, \sigma, \mu, \eta, \xi,\zeta,\vartheta  \in \mathbb{C}, k \in\mathbb{R}$, $\Re(\sigma)>0, \Re(\zeta)>0, \Re(\vartheta)>0,\Re(\mu)>0, \Re (l)>0,\Re (m)>0>$ and $q\in\mathbb{R}^+$, such that  $\eta>0$, then

\begin{eqnarray}\label{TH4-1}\aligned& B\left\{\left({}^{\eta}D^{\sigma}_{0-}t^\mu E^{\vartheta, q}_{k,\xi,\zeta}(tz)^\nu\right)(x) : l, m \right\}=(-1)^{-\sigma}\Gamma(m)\hskip2mm x^{\mu-\sigma\eta}\frac{{\eta}^\sigma}{\Gamma\left(\frac{\vartheta}{k}\right)}k^{1-\frac{\zeta}{k}}\\&\hskip 12mm\times{}_{3}\Psi_{3}\left[
 \begin{array}{cc} 
\left(\frac{\vartheta}{k},{q}\right), \left(\frac{\mu}{\eta}+1,\frac{\nu}{\eta}\right),(l,\nu),\\
	\left(\frac{\zeta}{k},\frac{\xi}{k}\right), \left(\frac{\mu}{\eta}-\sigma+1,\frac{\nu}{\eta},\right),
	(l+m,\nu)\end{array}\left| \hskip 1mm k^{(q-\frac{\xi}{k})}x^{\nu}\right
.\right].\endaligned \end{eqnarray}

\begin{proof}
For convenience, we denote the left-hand side of the result \eqref{TH4-1} by $\mathscr{B}$. Using the definition of Beta transform, the LHS of 
\eqref{TH4-1} becomes:

\begin{eqnarray}\label{TH4-2} \aligned& \mathscr{B}=\int_{0}^{1} z^{l-1} (1-z)^{m-1} \left({}^{\eta}D^{\sigma}_{0-}t^\mu E^{\vartheta, q}_{k,\xi,\zeta}(tz)^\nu\right)(x) dz,  \endaligned \end{eqnarray}

further using \eqref{k-ML} and then changing the order of integration and summation,which is valid under the conditions of Theorem 2, then 

\begin{eqnarray}\label{TH4-3}\aligned& \mathscr{B}=\sum_{n=0}^{\infty}\frac{(\vartheta)_{rq,k}}{\Gamma_{k}(r\xi+\zeta)}\frac{1}{r!}\left({}^{\eta}D^{\sigma}_{0-}t^{r\nu+\mu}\right)(x)\int_{0}^{1}{z^{l+r\nu-1}}(1-z)^{m-1}dz, \endaligned \end{eqnarray}

applying the result \eqref{d2}, after simplification Eq.\eqref{TH4-3} reduced to 

\begin{eqnarray}\label{TH4-4}\aligned& \mathscr{B}=(-1)^{-\sigma} \hskip 1mm x^{\mu-\sigma\eta}\frac{{\eta}^\sigma}{\Gamma\left(\frac{\vartheta}{k}\right)}k^{1-\frac{\zeta}{k}}\sum_{r=0}^{\infty}\frac{1}{r!}\frac{\Gamma(\frac{\vartheta}{k}+rq)}{\Gamma(\frac{\zeta}{k}+\frac{\xi}{k}r)} \frac{\Gamma\left(\frac{\mu}{\eta}+1+\frac{\nu}{\eta}r\right)}{\Gamma\left(\frac{\mu}{\eta}+1-\sigma+\frac{\nu}{\eta}r\right)}\\&\hskip 26mm\times  k^{r(q-\frac{\xi}{k})} \int_{0}^{1}{z^{l+n \nu-1}}(1-z)^{m-1}dz, \endaligned \end{eqnarray}

applying the definition of Beta transform, Eq.\eqref{TH4-4} reduced to

\begin{eqnarray}\label{TH4-5}\aligned& \mathscr{B}=(-1)^{-\sigma} \hskip 1mm x^{\mu-\sigma\eta}\frac{{\eta}^\sigma}{\Gamma\left(\frac{\vartheta}{k}\right)}k^{1-\frac{\zeta}{k}}\sum_{r=0}^{\infty}\frac{1}{r!}\frac{\Gamma(\frac{\vartheta}{k}+rq)}{\Gamma(\frac{\zeta}{k}+\frac{\xi}{k}r)} \frac{\Gamma\left(\frac{\mu}{\eta}+1+\frac{\nu}{\eta}r\right)}{\Gamma\left(\frac{\mu}{\eta}+1-\sigma+\frac{\nu}{\eta}r\right)}\\&\hskip45mm\times  k^{r(q-\frac{\xi}{k})}\frac{\Gamma(l+\nu n) \Gamma(m)}{\Gamma(l+m+\nu n)}. \endaligned \end{eqnarray}

\begin{eqnarray}\label{TH4-6}\aligned& \mathscr{B}=(-1)^{-\sigma} \hskip 1mm\Gamma(m)\hskip2mmx^{\mu-\sigma\eta}\frac{{\eta}^\sigma}{\Gamma\left(\frac{\vartheta}{k}\right)}k^{1-\frac{\zeta}{k}}{}_{3}\Psi_{3}\left[
 \begin{array}{cc} 
\left(\frac{\vartheta}{k},{q}\right), \left(\frac{\mu}{\eta}+1,\frac{\nu}{\eta}\right),(l,\nu),\\
	\left(\frac{\zeta}{k},\frac{\xi}{k}\right), \left(\frac{\mu}{\eta}-\sigma+1,\frac{\nu}{\eta},\right),
	(l+m,\nu)\end{array}\left| \hskip 1mm k^{(q-\frac{\xi}{k})}x^{\nu}\right
.\right].\endaligned \end{eqnarray}

\end{proof}
\end{theorem}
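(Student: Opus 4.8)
The plan is to run the computation of Theorem~\ref{Th3} almost verbatim, replacing the left-sided operator ${}^{\eta}D^{\sigma}_{0+}$ by the right-sided one ${}^{\eta}D^{\sigma}_{0-}$, so that the factor $(-1)^{-\sigma}$ produced in the proof of Theorem~\ref{TH2} is carried along throughout. First I would denote the left-hand side by $\mathscr{B}$ and write it as the Beta-transform integral $\int_{0}^{1} z^{l-1}(1-z)^{m-1}\bigl({}^{\eta}D^{\sigma}_{0-}t^{\mu}E^{\vartheta,q}_{k,\xi,\zeta}\bigl((tz)^{\nu}\bigr)\bigr)(x)\,dz$ as in \eqref{TH4-2}. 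Inserting the series \eqref{k-ML} for $E^{\vartheta,q}_{k,\xi,\zeta}\bigl((tz)^{\nu}\bigr)$ and interchanging the $r$-sum with the $z$-integration (valid under the hypotheses, exactly as claimed in \eqref{TH4-3}), the $r$-th term factors into $t^{r\nu+\mu}$, on which the fractional derivative acts, times the weight $z^{l+r\nu-1}(1-z)^{m-1}$.

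Next I would evaluate $\bigl({}^{\eta}D^{\sigma}_{0-}t^{r\nu+\mu}\bigr)(x)$ from \eqref{d2}: the substitution $t^{\eta}=x^{\eta}z$ used in the proof of Theorem~\ref{TH2} turns the inner integral into $\int_{0}^{1} z^{(r\nu+\mu)/\eta}(1-z)^{-\sigma}\,dz$ and supplies the factor $(-1)^{-\sigma}$, so that after differentiating and simplifying one gets $\bigl({}^{\eta}D^{\sigma}_{0-}t^{r\nu+\mu}\bigr)(x)=(-1)^{-\sigma}\,\eta^{\sigma}\,x^{r\nu+\mu-\sigma\eta}\,\Gamma\bigl(\tfrac{\mu}{\eta}+1+\tfrac{\nu}{\eta}r\bigr)\big/\Gamma\bigl(\tfrac{\mu}{\eta}+1-\sigma+\tfrac{\nu}{\eta}r\bigr)$. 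In parallel I would rewrite the $k$-symbols via \eqref{P6} and \eqref{P4}, so that $(\vartheta)_{rq,k}/\Gamma_{k}(r\xi+\zeta)$ becomes $k^{1-\zeta/k}\,k^{r(q-\xi/k)}\,\Gamma(\tfrac{\vartheta}{k}+rq)\big/\bigl(\Gamma(\tfrac{\vartheta}{k})\,\Gamma(\tfrac{\zeta}{k}+\tfrac{\xi}{k}r)\bigr)$, matching \eqref{TH4-4}.

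Then I would compute the surviving Beta integral $\int_{0}^{1} z^{l+r\nu-1}(1-z)^{m-1}\,dz=\Gamma(l+\nu r)\,\Gamma(m)\big/\Gamma(l+m+\nu r)$, pull $(-1)^{-\sigma}$, $\Gamma(m)$, $x^{\mu-\sigma\eta}$, $\eta^{\sigma}/\Gamma(\vartheta/k)$ and $k^{1-\zeta/k}$ out of the sum, and collect the remaining $x$- and $k$-powers so the running variable appears as $\bigl(k^{q-\xi/k}x^{\nu}\bigr)^{r}$. What is left is $\sum_{r\ge 0}\frac{1}{r!}\,\frac{\Gamma(\vartheta/k+rq)\,\Gamma(\mu/\eta+1+(\nu/\eta)r)\,\Gamma(l+\nu r)}{\Gamma(\zeta/k+(\xi/k)r)\,\Gamma(\mu/\eta-\sigma+1+(\nu/\eta)r)\,\Gamma(l+m+\nu r)}\bigl(k^{q-\xi/k}x^{\nu}\bigr)^{r}$, which is precisely the ${}_{3}\Psi_{3}$ of \eqref{FW} with upper parameters $(\tfrac{\vartheta}{k},q),(\tfrac{\mu}{\eta}+1,\tfrac{\nu}{\eta}),(l,\nu)$ and lower parameters $(\tfrac{\zeta}{k},\tfrac{\xi}{k}),(\tfrac{\mu}{\eta}-\sigma+1,\tfrac{\nu}{\eta}),(l+m,\nu)$; this yields \eqref{TH4-1}.

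The only step requiring real care is the termwise interchange of $\int_{0}^{1}dz$ with the $r$-sum: it is justified by absolute convergence, since for $\Re(l)>0$ and $\Re(m)>0$ the weights $z^{l+r\nu-1}(1-z)^{m-1}$ are integrably dominated on $[0,1]$ uniformly in $r$, while the $k$-Mittag-Leffler coefficients decay fast enough for the resulting ${}_{3}\Psi_{3}$ to converge (its parameters satisfy the growth condition $1+\sum\zeta_{j}-\sum\xi_{i}\ge 0$ attached to \eqref{FW}); this is the content of the phrase ``valid under the conditions of Theorem 2'' in \eqref{TH4-3}. One should also record, as in Theorem~\ref{TH2}, that the branch of $(-1)^{-\sigma}$ is the one fixed by the substitution $t^{\eta}=x^{\eta}z$ in \eqref{d2}, so no further sign ambiguity is introduced.
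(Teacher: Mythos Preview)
Your proposal is correct and follows essentially the same approach as the paper's own proof: write $\mathscr{B}$ as the Beta-transform integral, expand $E^{\vartheta,q}_{k,\xi,\zeta}$ termwise and swap sum and integral, apply the power-function evaluation of ${}^{\eta}D^{\sigma}_{0-}$ from Theorem~\ref{TH2} (which supplies the $(-1)^{-\sigma}$), convert the $k$-Pochhammer and $\Gamma_k$ via \eqref{P4}--\eqref{P6}, evaluate the remaining Beta integral, and identify the result as a ${}_{3}\Psi_{3}$. Your extra remarks on the convergence justification and the branch of $(-1)^{-\sigma}$ go slightly beyond what the paper spells out, but the skeleton of the argument is identical.
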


\begin{theorem}\label{Th5} Let $x>0, \sigma, \mu, \eta, \xi,\zeta,\vartheta  \in \mathbb{C}, k \in\mathbb{R}$, $\Re(\sigma)>0, \Re(\zeta)>0, \Re(\vartheta)>0,\Re(\mu)>0, \Re (l)>0,\Re (m)>0>$ and $q\in\mathbb{R}^+$, such that  $\eta>0$, then

\begin{eqnarray}\label{TH5-1}\aligned& L\left\{z^{l-1}\left({}^{\eta}D^{\sigma}_{0+}t^\mu E^{\vartheta, q}_{k,\xi,\zeta}(tz)^\nu\right)(x)\right\}\\&=\frac{x^{\mu-\sigma\eta}}{s^l}\frac{{\eta}^\sigma}{\Gamma\left(\frac{\vartheta}{k}\right)}k^{1-\frac{\zeta}{k}}{}_{3}\Psi_{2}\left[
 \begin{array}{cc} 
\left(\frac{\vartheta}{k},\frac{q}{k}\right), \left(\frac{\mu}{\eta}+1,\frac{\nu}{\eta}\right),(l,\nu)\\
	\left(\frac{\zeta}{k},\frac{\xi}{k}\right), \left(\frac{\mu}{\eta}-\sigma+1,\frac{\nu}{\eta}\right) \end{array}\left|k^{(q-\frac{\xi}{k})}(\frac{x}{s})^{\nu}\right.
\right].\endaligned  \end{eqnarray}
\begin{proof}
For convenience, we denote the left-hand side of the result \eqref{TH5-1} by $\mathscr{L}$. Using the definition of Laplace transform, the LHS of 
\eqref{TH5-1} becomes:

\begin{eqnarray}\label{TH5-2} \aligned& \mathscr{L}=\int_{0}^{\infty} e^{-sz}z^{l-1}\left({}^{\eta}D^{\sigma}_{0+}t^\mu E^{\vartheta, q}_{k,\xi,\zeta}(tz)^\nu\right)(x) dz, \endaligned \end{eqnarray}

further using \eqref{k-ML} and then changing the order of integration and summation,which is valid under the conditions of Theorem 1, then 
applying the result \eqref{d1}, after simplification Eq.\eqref{TH5-2} reduced to

\begin{eqnarray}\label{TH5-3}\aligned& \mathscr{L}=\sum_{r=0}^{\infty}\frac{(\vartheta)_{rq,k}}{\Gamma_{k}(r\xi+\zeta)}\frac{1}{r!}\left({}^{\eta}D^{\sigma}_{0+}t^{r\nu+\mu}\right)(x)\int_{0}^{\infty}{e^{-sz}z^{l+r \nu-1}}dz. \endaligned \end{eqnarray}

Eq.\eqref{TH5-3} reduced to

\begin{eqnarray}\label{TH5-4}\aligned& \mathscr{L}=x^{\mu-\sigma\eta}\frac{{\eta}^\sigma}{\Gamma\left(\frac{\vartheta}{k}\right)}k^{1-\frac{\zeta}{k}}\sum_{r=0}^{\infty}\frac{1}{r!}\frac{\Gamma(\frac{\vartheta}{k}+rq)}{\Gamma(\frac{\zeta}{k}+\frac{\xi}{k}r)} \frac{\Gamma\left(\frac{\mu}{\eta}+1+\frac{\nu}{\eta}r\right)}{\Gamma\left(\frac{\mu}{\eta}+1-\sigma+\frac{\nu}{\eta}r\right)}  k^{r(q-\frac{\xi}{k})}\times\frac{\Gamma(l+\nu r)}{s^{\l+\nu r}}{}. \endaligned \end{eqnarray}

\begin{eqnarray}\label{TH5-5}\aligned& \mathscr{L}=\frac{x^{\mu-\sigma\eta}}{s^l}\frac{{\eta}^\sigma}{\Gamma\left(\frac{\vartheta}{k}\right)}k^{1-\frac{\zeta}{k}}\times{}_{3}\Psi_{2}\left[
 \begin{array}{cc} 
\left(\frac{\vartheta}{k},\frac{q}{k}\right), \left(\frac{\mu}{\eta}+1,\frac{\nu}{\eta}\right),(l,\nu)\\
	\left(\frac{\zeta}{k},\frac{\xi}{k}\right), \left(\frac{\mu}{\eta}-\sigma+1,\frac{\nu}{\eta}\right) \end{array}\left|k^{(q-\frac{\xi}{k})}(\frac{x}{s})^{\nu}\right.
\right].\endaligned  \end{eqnarray}

\end{proof}

\end{theorem}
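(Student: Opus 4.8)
The plan is to follow the scheme already used for Theorems \ref{TH1}--\ref{Th4}: expand the $k$-Mittag-Leffler function into its defining series \eqref{k-ML}, interchange that sum with the Laplace integration, reduce each term to the generalized fractional derivative of a monomial (the computation already performed inside the proof of Theorem \ref{TH1}), carry out the elementary $z$-integral, and finally identify the surviving series as a Fox--Wright ${}_{3}\Psi_{2}$ through \eqref{FW}.

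First I would write the left-hand side of \eqref{TH5-1}, call it $\mathscr{L}$, from the definition of the Laplace transform,
\[
\mathscr{L}=\int_{0}^{\infty}e^{-sz}\,z^{l-1}\left({}^{\eta}D^{\sigma}_{0+}t^{\mu}E^{\vartheta,q}_{k,\xi,\zeta}((tz)^{\nu})\right)(x)\,dz .
\]
Inserting \eqref{k-ML} and splitting $(tz)^{\nu r}=t^{\nu r}z^{\nu r}$, the factor $t^{\nu r}$ combines with $t^{\mu}$ (the variable on which ${}^{\eta}D^{\sigma}_{0+}$ acts) while $z^{\nu r}$ combines with $z^{l-1}$ (the variable of the transform). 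Under the stated hypotheses together with $\Re(s)>0$ the series converges absolutely, so the interchange of summation and integration is legitimate exactly as in the earlier proofs, and
\[
\mathscr{L}=\sum_{r=0}^{\infty}\frac{(\vartheta)_{rq,k}}{\Gamma_{k}(r\xi+\zeta)\,r!}\left({}^{\eta}D^{\sigma}_{0+}t^{r\nu+\mu}\right)(x)\int_{0}^{\infty}e^{-sz}\,z^{l+r\nu-1}\,dz .
\]

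Next I would evaluate the two factors in the summand. The $z$-integral is $\int_{0}^{\infty}e^{-sz}z^{l+r\nu-1}\,dz=\Gamma(l+r\nu)\,s^{-(l+r\nu)}$, valid for $\Re(l)>0$, $\Re(s)>0$. For $\left({}^{\eta}D^{\sigma}_{0+}t^{r\nu+\mu}\right)(x)$ I would quote the computation inside the proof of Theorem \ref{TH1} (the steps \eqref{TH1-3}--\eqref{TH1-6}): the substitution $t^{\eta}=x^{\eta}z$ turns the defining integral of \eqref{d1} into a Beta integral, the single operator $x^{1-\eta}\frac{d}{dx}$ is resolved by \eqref{O4a}, and \eqref{P6}, \eqref{P4} convert the $k$-Pochhammer symbol and $k$-gamma function to classical ones; the result is
\[
\frac{(\vartheta)_{rq,k}}{\Gamma_{k}(r\xi+\zeta)}\left({}^{\eta}D^{\sigma}_{0+}t^{r\nu+\mu}\right)(x)
=x^{\mu-\sigma\eta}\,\frac{\eta^{\sigma}}{\Gamma(\vartheta/k)}\,k^{1-\zeta/k}\,
\frac{\Gamma(\vartheta/k+rq)\,\Gamma(\mu/\eta+1+\nu r/\eta)}{\Gamma(\zeta/k+\xi r/k)\,\Gamma(\mu/\eta+1-\sigma+\nu r/\eta)}\,k^{r(q-\xi/k)}\,x^{\nu r}.
\]
Substituting both into the sum, pulling the $r$-free prefactor $x^{\mu-\sigma\eta}s^{-l}\eta^{\sigma}k^{1-\zeta/k}/\Gamma(\vartheta/k)$ out, and merging the $r$-dependent exponentials into the single power $\left(k^{q-\xi/k}(x/s)^{\nu}\right)^{r}$, the leftover series is, by \eqref{FW},
\[
\sum_{r=0}^{\infty}\frac{\Gamma(\vartheta/k+rq)\,\Gamma(\mu/\eta+1+\nu r/\eta)\,\Gamma(l+\nu r)}{\Gamma(\zeta/k+\xi r/k)\,\Gamma(\mu/\eta+1-\sigma+\nu r/\eta)}\,\frac{w^{r}}{r!}
={}_{3}\Psi_{2}\left[\begin{array}{c}(\vartheta/k,\,q),\ (\mu/\eta+1,\,\nu/\eta),\ (l,\,\nu);\\ (\zeta/k,\,\xi/k),\ (\mu/\eta-\sigma+1,\,\nu/\eta);\end{array}\ w\right],\qquad w=k^{q-\xi/k}(x/s)^{\nu},
\]
which is exactly the Fox--Wright function on the right of \eqref{TH5-1}; its convergence requires $1+\xi/k-q-\nu\ge 0$. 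Reassembling the prefactor yields \eqref{TH5-1}.

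The argument is purely computational, so the main obstacle is bookkeeping rather than anything conceptual: one must track the powers of $x$, $s$ and $k$ so that the three stray factors $x^{\nu r}$, $s^{-\nu r}$ and $k^{r(q-\xi/k)}$ recombine into the single Fox--Wright argument $w=k^{q-\xi/k}(x/s)^{\nu}$, and one must apply \eqref{P6} carefully so that the first numerator parameter pair of the ${}_{3}\Psi_{2}$ comes out as $(\vartheta/k,\,q)$ (the displayed $q/k$ in \eqref{TH5-1} should read $q$). The one genuinely analytic point is the interchange of the infinite sum with the Laplace integral, which is justified by absolute convergence under the stated parameter restrictions, just as in Theorem \ref{TH1}.
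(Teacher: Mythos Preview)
Your proposal is correct and follows essentially the same route as the paper's own proof: write the Laplace transform as an integral, expand the $k$-Mittag-Leffler function via \eqref{k-ML}, interchange summation with integration, evaluate $\left({}^{\eta}D^{\sigma}_{0+}t^{r\nu+\mu}\right)(x)$ by quoting the computation from Theorem~\ref{TH1}, compute the elementary Laplace integral $\int_{0}^{\infty}e^{-sz}z^{l+r\nu-1}\,dz=\Gamma(l+r\nu)s^{-(l+r\nu)}$, and recognize the resulting series as a ${}_{3}\Psi_{2}$ via \eqref{FW}. Your observation that the first numerator parameter pair should read $(\vartheta/k,\,q)$ rather than $(\vartheta/k,\,q/k)$ is a genuine correction to the paper's displayed formula, consistent with \eqref{TH5-4} and with the corresponding entry in Theorem~\ref{TH1}.
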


\begin{theorem}\label{Th6} Let $x>0, \sigma, \mu, \eta, \xi,\zeta,\vartheta  \in \mathbb{C}, k \in\mathbb{R}$, $\Re(\sigma)>0, \Re(\zeta)>0, \Re(\vartheta)>0,\Re(\mu)>0, \Re (l)>0,\Re (m)>0>$ and $q\in\mathbb{R}^+$, such that  $\eta>0$, then

\begin{eqnarray}\label{TH6-1}\aligned& L\left\{z^{l-1}\left({}^{\eta}D^{\sigma}_{0-}t^\mu E^{\vartheta, q}_{k,\xi,\zeta}(tz)^\nu\right)(x)\right\}=(-1)^{-\sigma}\frac{x^{\mu-\sigma\eta}}{s^l}\frac{{\eta}^\sigma}{\Gamma\left(\frac{\vartheta}{k}\right)}k^{1-\frac{\zeta}{k}}\\&\hskip6mm\times{}_{3}\Psi_{2}\left[
 \begin{array}{cc} 
\left(\frac{\vartheta}{k},\frac{q}{k}\right), \left(\frac{\mu}{\eta}+1,\frac{\nu}{\eta}\right),(l,\nu)\\
	\left(\frac{\zeta}{k},\frac{\xi}{k}\right), \left(\frac{\mu}{\eta}-\sigma+1,\frac{\nu}{\eta}\right) \end{array}\left|k^{(q-\frac{\xi}{k})}(\frac{x}{s})^{\nu}\right.
\right].\endaligned  \end{eqnarray}
\begin{proof}
For convenience, we denote the left-hand side of the result \eqref{TH6-1} by $\mathscr{L}$. Using the definition of Laplace transform, the LHS of 
\eqref{TH5-1} becomes:

\begin{eqnarray}\label{TH6-2} \aligned& \mathscr{L}=\int_{0}^{\infty} e^{-sz}z^{l-1}\left({}^{\eta}D^{\sigma}_{0-}t^\mu E^{\vartheta, q}_{k,\xi,\zeta}(tz)^\nu\right)(x) dz, \endaligned \end{eqnarray}

further using \eqref{k-ML} and then changing the order of integration and summation,which is valid under the conditions of Theorem 2, then 
applying the result \eqref{d2}, after simplification Eq.\eqref{TH6-2} reduced to

\begin{eqnarray}\label{TH6-3}\aligned& \mathscr{L}=(-1)^{-\sigma}\sum_{r=0}^{\infty}\frac{(\vartheta)_{rq,k}}{\Gamma_{k}(r\xi+\zeta)}\frac{1}{r!}\left({}^{\eta}D^{\sigma}_{0+}t^{r\nu+\mu}\right)(x)\int_{0}^{\infty}{e^{-sz}z^{l+r \nu-1}}dz. \endaligned \end{eqnarray}

Eq.\eqref{TH6-3} reduced to

\begin{eqnarray}\label{TH6-4}\aligned& \mathscr{L}=(-1)^{-\sigma}x^{\mu-\sigma\eta}\frac{{\eta}^\sigma}{\Gamma\left(\frac{\vartheta}{k}\right)}k^{1-\frac{\zeta}{k}}\sum_{r=0}^{\infty}\frac{1}{r!}\frac{\Gamma(\frac{\vartheta}{k}+rq)}{\Gamma(\frac{\zeta}{k}+\frac{\xi}{k}r)} \frac{\Gamma\left(\frac{\mu}{\eta}+1+\frac{\nu}{\eta}r\right)}{\Gamma\left(\frac{\mu}{\eta}+1-\sigma+\frac{\nu}{\eta}r\right)}  k^{r(q-\frac{\xi}{k})}\frac{\Gamma(l+\nu r)}{s^{\l+\nu r}}{}. \endaligned \end{eqnarray}

\begin{eqnarray}\label{TH6-5}\aligned& \mathscr{L}=(-1)^{-\sigma}\frac{x^{\mu-\sigma\eta}}{s^l}\frac{{\eta}^\sigma}{\Gamma\left(\frac{\vartheta}{k}\right)}k^{1-\frac{\zeta}{k}}{}_{3}\Psi_{2}\left[
 \begin{array}{cc} 
\left(\frac{\vartheta}{k},\frac{q}{k}\right), \left(\frac{\mu}{\eta}+1,\frac{\nu}{\eta}\right),(l,\nu)\\
	\left(\frac{\zeta}{k},\frac{\xi}{k}\right), \left(\frac{\mu}{\eta}-\sigma+1,\frac{\nu}{\eta}\right) \end{array}\left|k^{(q-\frac{\xi}{k})}(\frac{x}{s})^{\nu}\right.
\right].\endaligned  \end{eqnarray}

\end{proof}

\end{theorem}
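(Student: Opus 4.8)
The plan is to mirror the proof of Theorem~\ref{Th5}, the only new feature being the sign factor $(-1)^{-\sigma}$ produced by the right-sided operator ${}^{\eta}D^{\sigma}_{0-}$ in place of ${}^{\eta}D^{\sigma}_{0+}$. Denote the left-hand side of \eqref{TH6-1} by $\mathscr{L}$. First I would write out the Laplace transform explicitly,
\[
\mathscr{L}=\int_{0}^{\infty}e^{-sz}z^{l-1}\left({}^{\eta}D^{\sigma}_{0-}t^{\mu}E^{\vartheta,q}_{k,\xi,\zeta}((tz)^{\nu})\right)(x)\,dz,
\]
and then insert the series \eqref{k-ML}. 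Since $E^{\vartheta,q}_{k,\xi,\zeta}((tz)^{\nu})=\sum_{r\ge 0}\frac{(\vartheta)_{rq,k}}{\Gamma_{k}(r\xi+\zeta)\,r!}\,t^{r\nu}z^{r\nu}$, the variable $z$ factors off as the monomial $z^{r\nu}$, so after interchanging summation and integration (legitimate under the stated hypotheses, exactly as in Theorem~\ref{TH2}) one is left with
\[
\mathscr{L}=\sum_{r=0}^{\infty}\frac{(\vartheta)_{rq,k}}{\Gamma_{k}(r\xi+\zeta)}\frac{1}{r!}\left({}^{\eta}D^{\sigma}_{0-}t^{r\nu+\mu}\right)(x)\int_{0}^{\infty}e^{-sz}z^{l+r\nu-1}\,dz.
\]

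Next I would reuse the power-function computation already carried out inside the proof of Theorem~\ref{TH2}: applying \eqref{d2} together with \eqref{O4a} gives
\[
\left({}^{\eta}D^{\sigma}_{0-}t^{r\nu+\mu}\right)(x)=(-1)^{-\sigma}\,\eta^{\sigma}\,x^{r\nu+\mu-\sigma\eta}\,\frac{\Gamma\!\left(\frac{\mu}{\eta}+1+\frac{\nu}{\eta}r\right)}{\Gamma\!\left(\frac{\mu}{\eta}+1-\sigma+\frac{\nu}{\eta}r\right)},
\]
while the elementary Gamma integral yields $\int_{0}^{\infty}e^{-sz}z^{l+r\nu-1}\,dz=\Gamma(l+r\nu)\,s^{-(l+r\nu)}$ for $\Re(s)>0$, $\Re(l)>0$. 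Substituting these two evaluations, and then converting the $k$-Pochhammer symbol and $k$-Gamma function to their classical counterparts via \eqref{P6} and \eqref{P4} — which is what produces the prefactor $\frac{k^{1-\zeta/k}}{\Gamma(\vartheta/k)}$ and the per-term factor $k^{r(q-\xi/k)}$ — collapses $\mathscr{L}$ to a single power series in $k^{q-\xi/k}(x/s)^{\nu}$.

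Finally I would read off the Fox–Wright structure. Pulling out $s^{-l}$, the factor $(-1)^{-\sigma}$, and all $r$-independent constants leaves
\[
\sum_{r=0}^{\infty}\frac{\Gamma\!\left(\frac{\vartheta}{k}+qr\right)\Gamma\!\left(\frac{\mu}{\eta}+1+\frac{\nu}{\eta}r\right)\Gamma(l+\nu r)}{\Gamma\!\left(\frac{\zeta}{k}+\frac{\xi}{k}r\right)\Gamma\!\left(\frac{\mu}{\eta}+1-\sigma+\frac{\nu}{\eta}r\right)}\frac{\left(k^{q-\frac{\xi}{k}}(x/s)^{\nu}\right)^{r}}{r!},
\]
which is exactly the ${}_{3}\Psi_{2}$ appearing on the right-hand side of \eqref{TH6-1}, with upper parameters $\left(\frac{\vartheta}{k},q\right),\left(\frac{\mu}{\eta}+1,\frac{\nu}{\eta}\right),(l,\nu)$ and lower parameters $\left(\frac{\zeta}{k},\frac{\xi}{k}\right),\left(\frac{\mu}{\eta}-\sigma+1,\frac{\nu}{\eta}\right)$ evaluated at $k^{q-\xi/k}(x/s)^{\nu}$; reassembling the extracted constants gives \eqref{TH6-1}. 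The only genuinely delicate step is the term-by-term interchange of $\sum_{r}$ with the Laplace integral; this needs absolute convergence of the double series/integral, which follows from $\Re(l)>0$, $\Re(s)>0$, $\eta>0$ together with the Fox–Wright growth condition $1+\sum_{j}\zeta_{j}-\sum_{i}\xi_{i}\ge 0$ guaranteeing convergence of the resulting ${}_{3}\Psi_{2}$. Everything else is routine bookkeeping identical to the proofs of Theorems~\ref{TH2} and \ref{Th5}.
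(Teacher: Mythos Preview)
Your proposal is correct and follows essentially the same route as the paper: write the Laplace integral, expand the $k$-Mittag--Leffler function termwise, swap sum and integral, evaluate ${}^{\eta}D^{\sigma}_{0-}t^{r\nu+\mu}$ via Theorem~\ref{TH2} (picking up the $(-1)^{-\sigma}$), compute the remaining Gamma integral, and repackage as ${}_{3}\Psi_{2}$. If anything you are slightly more explicit than the paper about the conversion $(\vartheta)_{rq,k}\to\Gamma(\vartheta/k+qr)/\Gamma(\vartheta/k)$ and about the convergence justification; note also that your numerator parameter $\left(\frac{\vartheta}{k},q\right)$ matches the series you derived, whereas the $\frac{q}{k}$ printed in \eqref{TH6-1} appears to be a typo in the paper.
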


\section*{References}
%

\end{document}